\theoremstyle{theorem}
\newtheorem{theorem}{Theorem}
\newtheorem*{thm}{Theorem 1}
\newtheorem{prop}{Proposition}
\newtheorem{lem}{Lemma}
\newtheorem{conj}{Conjecture}
\theoremstyle{definition}
\newcommand{\R}{\mathbb{R}}
\newcommand{\E}{\mathbb{E}}
\newcommand{\al}{\alpha}
\newcommand{\be}{\beta}
\newcommand{\gam}{\gamma}
\newcommand{\sig}{\sigma}
\newcommand{\ep}{\epsilon}
\newcommand{\Om}{\Omega}
\renewcommand{\l}{\left}
\renewcommand{\r}{\right}
\newcommand{\half}{\frac{1}{2}}
\renewcommand{\c}[1]{\mathcal{#1}}
\renewcommand{\b}[1]{\mathbf{#1}}
\newcommand{\ol}[1]{\overline{#1}}
\newcommand{\rec}[1]{\frac{1}{#1}}
\newcommand{\f}[2]{\frac{#1}{#2}}
\newcommand{\floor}[1]{\l\lfloor #1\r\rfloor}
\newcommand{\ceil}[1]{\l\lceil #1\r\rceil}
\newcommand{\rand}[1]{\boldsymbol{#1}}
\renewcommand{\S}{\mathfrak{S}}
\setlist[itemize]{itemsep=2ex, topsep=2ex}
\title{Guessing about Guessing: Practical Strategies for Card Guessing with Feedback}
\author{Persi Diaconis \and Ron Graham \and Sam Spiro}
\date{\today}
\begin{document}
	\maketitle
\begin{abstract}
	In simple card games, cards are dealt one at a time and the player guesses each card sequentially.  We study problems where feedback (e.g., correct/incorrect) is given after each guess.  For decks with repeated values (as in blackjack where suits do not matter), the optimal strategy differs from the ``greedy strategy'' (of guessing a most likely card each round).  Further, both optimal and greedy strategies are far too complicated for real-time use by human players.  Our main results show that simple heuristics perform close to optimal.
\end{abstract}

\section{Introduction.}
Consider the following game.  A deck of $n$ cards labeled $1,2,\ldots,n$ is randomly shuffled.  A player then guesses each card (sequentially) as the cards are dealt face down on the table.  After each guess, the player is given some amount of feedback.  The main cases that we consider are the following:
\begin{itemize}[leftmargin=.3in]
	\item No feedback (the player is told nothing after each guess), 
	\item Complete feedback (the player is shown the value of the card after each guess),
	\item Yes/No feedback (the player is only told whether their guess was correct or not).
\end{itemize}
The final score is the total number of correct guesses made after all $n$ cards have been drawn.  Suppose that the player uses some fixed strategy $\c{G}$ for guessing cards based on the level of feedback they are given.   Let $\b{X}_i=1$ if the $i$th guess is correct (under the fixed strategy $\c{G}$) and $\b{X}_i=0$ otherwise.  Thus $\b{S}_n:=\b{X}_1+\cdots +\b{X}_n$ is the score in this game.  Then:
\begin{itemize}[leftmargin=.3in]
	\item With No feedback, $\Pr[\b{X}_i=1]=n^{-1}$ for all $1\le i\le n$, so the expected number of correct guesses is\[\E[\b{S}_n]=n\cdot n^{-1}=1.\]
	\item With Complete feedback, the maximizing strategy is to always guess a card known to be left in the deck.  Thus \[\Pr[\b{X}_1=1]=\rec{n},\ \Pr[\b{X}_2=1]=\rec{n-1},\ldots,\ \Pr[\b{X}_i=1]=\rec{n-i+1},\ldots,\]
	so $\E[\b{S}_n]=1+\half+\cdots +\rec{n}=\log n+O(1)$.
	\item With Yes/No feedback, consider the following strategy. Guess ``1.''  If this was correct, then the player knows ``1'' is no longer in the deck.  If incorrect, all the player knows for sure is that ``1'' is still in the deck.  Thus, heuristically, a good strategy would be to guess ``1'' until the player is told they are correct, then ``2'' until they are told they are correct (or until the deck runs out), then ``3,'' and so on.  And indeed, this strategy turns out to maximize $\E[\b{S}_{n}]$; see Diaconis and Graham~\cite{DG}.
	
	Under this strategy, the player always gets at least 1 correct guess, that is, $\Pr[\b{S}_n\ge 1]=1$.  There are at least 2 correct guesses if and only if ``2'' appears after ``1,'' so $\Pr[\b{S}_n\ge 2]=\half$.  More generally, for all $1\le k\le n$ we have $\Pr[\b{S}_n\ge k]=\rec{k!}$.  Thus under optimal play,
	\[\E[\b{S}_n]=\sum_{k=1}^n \Pr[\b{S}_n\ge k]=1+\half+\cdots+\rec{n!}=e-1+O(1/n!).\]
\end{itemize}

In real card games, cards have repeated values.  For example, in Blackjack or Baccarat, suits do not matter and all of the card types ``10, J, Q, K'' have value 10.  The analysis for No feedback in the repeated setting is trivial, but for Complete feedback things become more complex.  Under Complete feedback, one can easily prove that the strategy ``each round guess a card type that has the most number of copies left in the deck'' is an optimal strategy.  Thus computing the expected score under optimal play reduces to analyzing the performance of this (optimal) strategy, which was done by Diaconis and Graham~\cite{DG}.

The problem becomes significantly harder when we consider repeated values under Yes/No feedback.  In this setting, it is theoretically possible to compute the probability that the next card type is a given value ``$k$'' given the feedback from previous guesses; see Chung, Diaconis, Graham, and Mallows~\cite{CDGM} for information on how this can be done.  Thus one can define the  \textit{greedy strategy}: at each round, guess a card type that is most likely to appear next.  

The first issue with the greedy strategy is that it is very complicated to implement.  In particular, computing the probabilities needed in the strategy is equivalent to computing the permanents of certain matrices.  Thus this strategy is impractical for human players to use.

The second issue is that the greedy strategy is \textit{not} optimal; and in general it is not known what the optimal strategy is when one considers repeated values with Yes/No feedback. Even if such a strategy were known, it seems likely that it would be far too complex to implement by human players in practice. Given this, Diaconis and Graham~\cite{DG} posed the following two simple problems when the deck has $2n$ cards with two copies of each card type labeled $1,\ldots,n$ under Yes/No feedback:
\begin{itemize}[leftmargin=.3in]
	\item Is the optimal score of this game bounded as a function of $n$?
	\item Are there simple strategies in this game which perform reasonably well?
\end{itemize}
The answer to this first question has recently been answered positively by Diaconis, Graham, He, and Spiro~\cite{DGHS}.  In this article, we focus on the second question.  More precisely, we consider decks with $mn$ cards where each of the $n$ card types appear $m$ times.\footnote{A useful mnemonic is that $m$ is the \underline{m}ultiplicity of each card type, while $n$ is the \underline{n}umber of card types.}  We define $\b{S}_{m,n}$ to be the number of correct guesses made by the player with this deck if they use a given strategy under some level of feedback.  Our goal is to use practical strategies to bound the maximum and minimum possible values of $\E[\b{S}_{m,n}]$ under Yes/No feedback.  

To this end, we say that a strategy is an \textit{optimal strategy} if it achieves $\max \E[\b{S}_{m,n}]$, where the maximum ranges over all strategies, and similarly we say a strategy is an \textit{optimal mis\`ere strategy} if it achieves $\min \E[\b{S}_{m,n}]$.  Our main results are the following bounds on the score of the game under the optimal and optimal mis\`ere strategies.  Moreover, we will see that all of these bounds are achieved by relatively simple strategies.

\begin{theorem}\label{THM:LOWER}
	For all $m$ and $n\ge 8m$:
	\begin{itemize}[leftmargin=.3in]
		\item Under the optimal strategy with Yes/No feedback, \[\E[\b{S}_{m,n}]\ge m+\rec{40}\sqrt{m}.\]
		\item Under the optimal mis\`ere strategy with Yes/No feedback, \[\E[\b{S}_{m,n}]\le m-\rec{40}\sqrt{m}.\]
	\end{itemize}
\end{theorem}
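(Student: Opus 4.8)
The plan is to prove both bounds by exhibiting a single very simple family of strategies and analyzing it; since the optimal (resp.\ optimal mis\`ere) strategy does at least as well (resp.\ at least as badly) as any fixed strategy, a bound for the explicit strategy suffices. The starting observation is that the constant strategy ``always guess $1$'' scores exactly $m$, since it is correct precisely when a copy of card $1$ is dealt. To beat $m$, I would split the deck into halves of $\lfloor mn/2\rfloor$ and $\lceil mn/2\rceil$ cards and guess $1$ throughout the first half. Crucially, under Yes/No feedback this reveals \emph{exactly} how many copies of $1$ remain: if $\b{N}$ denotes the number of copies of $1$ lying in the second half, then the first-half score equals $m-\b{N}$ and the player knows $\b{N}$ before the second half begins. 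For the second half I then guess $1$ if $\b{N}\ge m/2$ and guess $2$ otherwise.

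To analyze this I would first compute the conditional expected score of the second half given the player's information. Guessing $1$ yields expected score $\b{N}$, while guessing $2$ yields, by exchangeability of the non-$1$ cards given the observed feedback, expected score $m\cdot\frac{\lceil mn/2\rceil-\b{N}}{mn-m}$, which is at least $m/2$ exactly when $\b{N}\le m/2$. Hence the second-half score is at least $\max(\b{N},m/2)$ in expectation, and combining with the first-half score $m-\b{N}$ gives \[\E[\b{S}_{m,n}]\ \ge\ m+\E\!\l[(m/2-\b{N})^+\r].\] Since the second half is an (essentially) exactly half-sized sample of the $mn$ positions, $\b{N}$ is hypergeometric with mean $m/2$ and symmetric about this mean, so $\E[(m/2-\b{N})^+]=\half\E|\b{N}-m/2|$ (odd $mn$ needs only a trivial adjustment). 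The mis\`ere bound is the mirror image: guess $1$ in the second half when $\b{N}\le m/2$ and guess $2$ otherwise, so that the second-half score is at most $\min(\b{N},m/2)$ in expectation and $\E[\b{S}_{m,n}]\le m-\half\E|\b{N}-m/2|$.

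Both statements therefore reduce to the single estimate $\E|\b{N}-m/2|\ge \rec{20}\sqrt m$. This mean absolute deviation bound is the heart of the argument and the step I expect to be most delicate, since it must hold with an explicit constant for \emph{all} $m$, not merely asymptotically, where the hypergeometric central limit theorem gives $\E|\b{N}-m/2|\sim\sqrt{2/\pi}\,\sig$ with $\sig\approx\half\sqrt m$. I would establish it by a second-and-fourth-moment argument: Cauchy--Schwarz gives $\E|\b{N}-m/2|\ge (\E(\b{N}-m/2)^2)^{3/2}/(\E(\b{N}-m/2)^4)^{1/2}$, after which one uses the hypergeometric variance $\mr{Var}(\b{N})=\Theta(m)$ together with a fourth-moment bound $\E(\b{N}-m/2)^4=O(m^2)$. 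The hypothesis $n\ge 8m$ enters here (and in the estimate for the switch-card yield) to keep the deck sparse enough that $\mr{Var}(\b{N})\ge cm$ with an explicit constant; tracking the constants then yields $\rec{20}\sqrt m$, hence the stated $m\pm\rec{40}\sqrt m$.
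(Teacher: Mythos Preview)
Your proposal is correct and uses the same underlying strategy as the paper---the halfway strategies $\c{H}^\pm$ that guess ``1'' through the first $\lfloor mn/2\rfloor$ trials and then decide between ``1'' and ``2'' based on the observed count---together with the same conditional-expectation calculation (your exchangeability argument is exactly the paper's Lemma~\ref{L-K2}). The organization of the analysis differs, however. The paper shifts the switching threshold to $\tfrac{1}{2}m+\tfrac{1}{2}\sqrt m$ so that the gain, when one switches, is deterministically at least $\tfrac{1}{2}\sqrt m$; this reduces the problem to the \emph{tail probability} estimate $\Pr[K_1\ge \tfrac{1}{2}m+\tfrac{1}{2}\sqrt m]\ge \tfrac{1}{20}$, which the paper proves by a pointwise hypergeometric-to-binomial comparison (Lemma~\ref{L-K1}) followed by a Berry--Esseen bound for the binomial with a computer check for $m\le 67$ (Lemma~\ref{lem:CLT}). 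You instead keep the natural threshold $m/2$ and reduce to the \emph{mean absolute deviation} estimate $\E|\b{N}-m/2|\ge \tfrac{1}{20}\sqrt m$, which you attack by the moment inequality $\E|X|\ge (\E X^2)^{3/2}/(\E X^4)^{1/2}$.

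Your route is a genuine alternative and arguably more self-contained: it avoids both the normal approximation and the finite computer verification. The price is that you must supply an explicit upper bound on the hypergeometric fourth central moment; this is not hard---for instance Hoeffding's convex-order inequality gives $\E(\b N-\tfrac m2)^4\le \E(\b B_m-\tfrac m2)^4=\tfrac{3m^2}{16}-\tfrac m8$ for $\b B_m\sim\mathrm{Bin}(m,\tfrac12)$, and combined with $\mathrm{Var}(\b N)\ge \tfrac{m}{4}\cdot\tfrac{mn-m}{mn-1}$ one gets a constant comfortably better than $\tfrac{1}{20}$---but you should state it rather than leave ``tracking the constants'' as an assertion. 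One small inaccuracy: the hypothesis $n\ge 8m$ is not actually needed to make $\mathrm{Var}(\b N)\ge c\,m$ (that holds for all $n\ge 2$); in the paper it is used to get the factor $\tfrac12$ in the hypergeometric-to-binomial comparison, and in your argument it at most sharpens the constants. The odd-$mn$ adjustment and the mis\`ere mirror image are fine as you describe them.
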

It was proved in \cite{DGHS} that $\E[\b{S}_{m,n}]\le m+O(m^{3/4}\log m)$ uniformly in $n$ under the optimal strategy when $n$ is sufficiently large in terms of $m$, so one cannot prove a lower bound that is much stronger than Theorem~\ref{THM:LOWER} for $m$ large (though the constant $1/40$ can easily be improved).  

Our second main result consists of bounds for small (fixed values) of $m$ when $n$ is large.
\begin{theorem}\label{THM:PLUS}
	For $n$ sufficiently large, under the optimal strategy with Yes/No feedback,
	\[\E[\b{S}_{2,n}]\ge 2.91,\]
	\[\E[\b{S}_{3,n}]\ge 3.97.\]
\end{theorem}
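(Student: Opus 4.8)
The plan is to exploit that we only need a \emph{lower} bound on the optimal score: since the optimal strategy is defined to maximize $\E[\b{S}_{m,n}]$, its value is at least the value $\E[\b{S}_{m,n}]$ achieved by any particular strategy $\c{G}$, so it suffices to exhibit one simple, human-usable strategy and estimate its expected score as $n\to\infty$. To run the estimate I would pass to the standard continuous model: give the $m$ copies of each type $j$ i.i.d.\ uniform arrival times $U_{j,1},\dots,U_{j,m}\in[0,1]$ and read off the deck in increasing order of arrival time. As $n\to\infty$ only finitely many types are relevant and the discreteness of the deck washes out, so the expected score converges to an expectation computed from these arrival times; writing $a_j=\max_i U_{j,i}$ and $b_j=\min_i U_{j,i}$, the whole analysis reduces to order statistics of the $U_{j,i}$.

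The natural first candidate is the strategy that guesses the current type repeatedly until all $m$ of its copies have been located and only then advances to the next type. This banks $m$ correct guesses from type $1$ with certainty (all its copies lie ahead at the start), and thereafter processes types $2,3,\dots$ in order, collecting exactly the copies of each type that arrive after the previous type was completed. Its weakness is that it \emph{stalls} forever the moment it reaches a type whose surviving copies were all dealt earlier (i.e.\ whenever $b_j$ lies before the completion time of type $j-1$); a short computation of the resulting Beta-function integrals shows this strategy scores only about $2.74$ when $m=2$, short of the target. The remedy, which keeps the strategy simple, is to forbid stalling: advance off the current type as soon as a fresh (untouched) type is at least as likely to be the next card, a single threshold rule that a human can execute, since the relevant comparison is between hitting probabilities that cross at an explicit time.

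With stalling removed, I would compute the limiting score by conditioning on the completion time $a_{j-1}$ of each processed type and summing the expected number of its successor's copies that remain, $m(1-a_{j-1})$, against the probability that the first $j-1$ types are processed in order. Exchangeability of the $a_j$, together with the fact that $\max(a_1,\dots,a_k)$ is the maximum of $km$ i.i.d.\ uniforms, collapses this to a rapidly convergent series of the general shape
\[ m+m\sum_{k\ge1}\frac{1}{k!\,(km+1)}, \]
whose values ($\approx 2.925$ for $m=2$ and $\approx 4.03$ for $m=3$) sit above $2.91$ and $3.97$.

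The main obstacle is twofold. First, the bound is genuinely tight: the stall-free value exceeds the advertised constants by only a hair, so the governing integrals must be evaluated exactly rather than merely bounded, and one must confirm that the \emph{implementable} threshold strategy really lands on the correct side of $2.91$ and $3.97$ rather than only the idealized series above. Second, one must justify interchanging the $n\to\infty$ limit with the infinite sum over processed types and control the end-of-deck corrections that the continuous model ignores; this is precisely what forces the hypothesis ``$n$ sufficiently large'' and motivates stating the constants slightly below the limiting values.
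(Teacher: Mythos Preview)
Your series
\[
m+m\sum_{k\ge1}\frac{1}{k!\,(km+1)}
\]
is the expected score of an \emph{oracle} strategy, not of the threshold rule you describe. The $k$th term equals $\E\bigl[m(1-a_k)\,\mathbf{1}\{a_1<\cdots<a_k\}\bigr]$, which is what you would collect from type $k{+}1$ if, on the event $\{a_1<\cdots<a_k\}$, you could switch to type $k{+}1$ precisely at time $a_k$. But under Yes/No feedback you learn $a_k$ only when you have registered all $m$ copies of type $k$, i.e.\ only on the strictly smaller event $\{b_k>a_{k-1}\}$. On the complementary slice $\{a_{k-1}<a_k,\ b_k<a_{k-1}\}$ the safe strategy stalls, and your threshold rule abandons type $k$ at a data-determined time that is \emph{not} $a_k$; the successor contribution is then $m(1-\text{switch time})$, not $m(1-a_k)$. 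So the series is not the value of any implementable strategy, and you explicitly flag that ``the implementable threshold strategy'' still needs to be checked. That check is the entire difficulty: replacing $\{a_1<\cdots<a_k\}$ by the feasible event $\{b_j>a_{j-1}\ \forall j\le k\}$ collapses the value to the $\approx 2.74$ you already computed, and with only $0.015$ of slack between $2.925$ and $2.91$ there is no room to assert without computation that the threshold rule recovers enough of the gap.

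The paper avoids this trap by choosing a concrete, finitely parametrized strategy whose score can be computed exactly. It uses a variant of the $\gamma$-shifting strategy with a cutoff $k$: guess $1$ until the first hit; if it comes before time $2\gamma n$ use the shifting strategy on types $1,\dots,k$ (cycling), otherwise use the safe strategy on $1,\dots,k$. Because only $k$ types matter, the score depends only on $\pi^{\le k}\in\S_{2,k}$ and on the index of the first $1$; the joint law of these is written down combinatorially, the sum over the index is replaced by an integral via the standard Riemann-sum lemma, and the remaining finite sum over $\S_{2,k}$ is evaluated by computer. Taking $k=6$, $\gamma=.35$ (respectively $m=3$, $k=5$, $\gamma=.25$) yields $2.9143$ (respectively $>3.97$) with rigorous $O(n^{-1})$ error. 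In short, the paper trades your clean closed-form for a brute-force certification of a specific strategy; your approach would be more elegant if it worked, but as written the central inequality is asserted for a strategy the player cannot execute.
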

Computational evidence suggests that the actual value of $\E[\b{S}_{2,n}]$ is close to the bound in Theorem~\ref{THM:PLUS}; see Table~\ref{tab:2Opt}.

Finally, we prove the following result regarding the optimal mis\`ere strategy.

\begin{theorem}\label{thm:minus}
	For all $m$ and $n$ sufficiently large in terms of $n$, under the optimal mis\`ere strategy with Yes/No feedback we have
	\[1-e^{-m}-o(1)\le \E[\b{S}_{m,n}]\le m-1+m^{-1}-m^{-1}e^{-m}+o(1).\]
\end{theorem}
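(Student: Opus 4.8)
The plan is to prove the two inequalities separately: the lower bound rests on a clean reduction valid for every strategy, while the upper bound rests on an explicit simple strategy.

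For the lower bound it suffices to show $\Pr[\b S_{m,n}=0]\le e^{-m}+o(1)$ for every mis\`ere strategy, since then $\E[\b S_{m,n}]\ge \Pr[\b S_{m,n}\ge 1]=1-\Pr[\b S_{m,n}=0]$. The key observation is that on the event $\b S_{m,n}=0$ every guess is incorrect, so every piece of feedback is ``No''; hence the guesses the strategy makes on this event form the \emph{fixed} deterministic sequence $\hat g=(\hat g_1,\ldots,\hat g_{mn})$ that the strategy outputs in response to an all-``No'' history. Writing $\b D=(\b D_1,\ldots,\b D_{mn})$ for the random arrangement of the deck, an easy induction shows that the events $\{\b S_{m,n}=0\}$ and $\bigcap_i\{\b D_i\ne \hat g_i\}$ coincide, so $\Pr[\b S_{m,n}=0]=\Pr[\bigcap_i\{\b D_i\ne \hat g_i\}]$. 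The problem is thereby reduced to bounding a pattern-avoidance probability uniformly over all sequences $\hat g$.

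To bound this avoidance probability, note that each value occurs $m$ times among the $mn$ positions, so $\Pr[\b D_i=\hat g_i]=1/n$ for every $i$, and the independent heuristic predicts $\prod_i(1-1/n)=(1-1/n)^{mn}\to e^{-m}$. I would make this rigorous via negative association: the match indicators $\b 1[\b D_i=\hat g_i]$ coming from a uniform arrangement of a fixed multiset are negatively associated (the positions compete for the limited supply of each value), and for negatively associated indicators $\Pr[\bigcap_i\{\b 1[\b D_i=\hat g_i]=0\}]\le \prod_i\Pr[\b D_i\ne \hat g_i]=(1-1/n)^{mn}$. This gives $\Pr[\b S_{m,n}=0]\le e^{-m}+o(1)$ for \emph{all} strategies at once, hence the lower bound. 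The step needing care is the negative association claim; if a self-contained argument is preferred, one can instead expand $\Pr[\bigcap_i\{\b D_i\ne\hat g_i\}]$ by inclusion--exclusion and show via factorial moments that the number of matches converges to a $\mathrm{Poisson}(m)$ law, with the balanced choice of $\hat g$ (each value used exactly $m$ times) being extremal.

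For the upper bound I would exhibit one simple strategy and bound its expected score. The strategy targets a single value, guessing ``$1$'' on every card until $m-1$ copies of ``$1$'' have appeared; this phase contributes exactly $m-1$. At that instant the player knows a single ``$1$'' remains, and switches to a dodging phase emulating the optimal single-copy mis\`ere play on the residual deck, guessing so as to avoid the final ``$1$'' and coasting on a confirmed-absent value once some type is exhausted. The analysis of this phase mirrors the single-copy computation implicit in the lower bound: the probability that the dodge fails and forces one extra correct guess is governed by a match-avoidance probability of the same kind, contributing the correction $\frac1m(1-e^{-m})$ and giving $\E[\b S_{m,n}]\le m-1+\frac1m-\frac1m e^{-m}+o(1)$. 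The main obstacle is precisely this endgame analysis: one must control the conditional law of the residual deck at the switching time and verify that the adaptive dodging phase really behaves like a single-copy instance. This is where the subtlety lies, since \emph{every} fixed (nonadaptive) guessing pattern has expected score exactly $\sum_i \Pr[\b D_i=g_i]=m$, so it is the use of feedback, not any static pattern, that produces the saving below $m$.
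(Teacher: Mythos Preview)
Your overall architecture (bound $\Pr[\b S_{m,n}=0]$ for the lower bound, exhibit an explicit strategy for the upper bound) matches the paper, but both halves of your execution have real gaps.

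\textbf{Lower bound.} The reduction to a fixed ``all-No'' guessing sequence $\hat g$ is exactly right, and the paper uses it too. However, the negative-association shortcut is simply false. Take $m=1$, $n=2$, $\hat g=(1,2)$: then $\Pr[\b D_1\ne 1,\ \b D_2\ne 2]=\tfrac12$, while $\prod_i\Pr[\b D_i\ne\hat g_i]=(1-\tfrac12)^2=\tfrac14$. The match indicators are \emph{positively} correlated when $\hat g_i\ne\hat g_j$ (fixing $\b D_i=\hat g_i$ removes a non-$\hat g_j$ card from the pool for position $j$), so the product bound goes the wrong way. Your fallback---show the balanced $\hat g$ (each value used exactly $m$ times) is extremal and then do inclusion--exclusion on that case---is precisely what the paper does: Lemma~\ref{lem:X} establishes extremality by quoting a permanent inequality from \cite{CDGM}, and Lemma~\ref{lem:approx} carries out the Bonferroni/Poisson computation. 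So once you drop negative association, you are back to the paper's route, and the extremality step is not as routine as your parenthetical suggests.

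\textbf{Upper bound.} Here your proposal diverges from the paper and does not close. The paper does \emph{not} wait for $m-1$ copies of a single value; it uses the avoiding strategy $\c A$ (guess $1,2,\ldots,n,1,2,\ldots$ until the first hit at value $k$, then guess $k$ forever). The point is that the first-hit time has an explicit, tractable law via Bonferroni, and conditional on a hit at position $k$ the residual score is $1$ plus a hypergeometric mean, yielding the closed form $m-1+m^{-1}-m^{-1}e^{-m}$ after integration (Proposition~\ref{prop:A}). Your ``wait for $m-1$ ones, then dodge'' strategy is not fully specified---you never say what the dodge actually guesses before any type is confirmed exhausted---and the asserted contribution $\tfrac1m(1-e^{-m})$ appears to be reverse-engineered from the target rather than derived. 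You correctly flag that the conditional law of the residual deck at the random switching time is the obstacle, but you do not resolve it; and simple candidate dodges (keep guessing ``1'', or cycle $2,3,\ldots$) either give exactly $m$ or lack an obvious route to that constant. The paper's $\c A$ sidesteps the whole issue by making the switching time the \emph{first} hit, so the pre-switch score is deterministically $1$ rather than $m-1$.
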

When $m=1$, Theorem~\ref{thm:minus} implies a previous result of \cite{DG}, see Theorem~\ref{thm:1}.  We also note that for large $m$, the upper bound of Theorem~\ref{THM:LOWER} is more effective than that of Theorem~\ref{thm:minus}.

\subsection{Organization and notation.}  The rest of the article is organized as follows.  In Section~\ref{sec:history}, we discuss the history of the problem in more depth.   In Section~\ref{sec:prac}, we state some practical strategies that one can implement in the Yes/No feedback model and give some computational data for how they perform.  We then prove rigorous bounds on some of these strategies (or slight technical variants thereof) in order to prove our main results.  In particular, Theorem~\ref{THM:LOWER} is proved in Section~\ref{sec:lower} and Theorem~\ref{THM:PLUS} in Section~\ref{sec:plus}. Concluding remarks and further problems are given in Section~\ref{sec:concluding}.

We gather some notation that we use throughout the text.  We use standard asymptotic notation throughout this article.  In particular, we write $O(f(n))$ to refer to a function $g(n)$ such that $\limsup_{n\to \infty} \f{g(n)}{f(n)}<\infty$, and similarly $o(f(n))$ refers to a function $g(n)$ such that $\lim_{n\to \infty} \f{g(n)}{f(n)}=0$.  We write $\Om(f(n))$ if $\liminf_{n\to \infty} \f{g(n)}{f(n)}>0$ and $\Theta(f(n))$ to mean a function that is both $O(f(n))$ and $\Om(f(n))$.  We write $f\sim g$ if $\lim_{n\to \infty} \f{g(n)}{f(n)}=1$.  For more on asymptotic notation, see the book by Florescu and Spencer~\cite{S}.

We let $\S_{m,n}$ refer to the set of all words $\pi$ consisting of $m$ copies of each element in $[n]:=\{1,2,\ldots,n\}$, which we think of as a permutation of a deck of $mn$ cards with $n$ card types each occurring with multiplicity $m$.  We always write $\rand{\pi}$ to indicate an element of $\S_{m,n}$ chosen uniformly at random.  If $\c{G}$ is a deterministic strategy for the player under some level of feedback, we let $S(\c{G},\pi)$ be the score of the player if they follow strategy $\c{G}$ and the deck is shuffled according to $\pi$.  Thus the expected score following strategy $\c{G}$ is $\E[S(\c{G},\rand{\pi})]$.  As much as possible we denote random variables, such as $\b{S}_{m,n}$, in bold and strategies, such as $\c{G}$, in script.

\section{History.}\label{sec:history}
\subsection{Real-world applications.}
The original motivation for studying guessing problems that use sampling without replacement and different levels of feedback comes from certain real-world problems.


One such problem involves clinical trials, which was investigated by Blackwell and Hodges~\cite{BH} and Efron~\cite{Efron}.  Suppose that one wishes to test the effectiveness of two treatment options.  In comparing the effectiveness of two treatments on $2m$ patients, suppose it is decided that $m$ patients are to be randomly selected to receive each treatment.  Assume the patients arrive sequentially and that they must be ruled either ineligible or eligible before being assigned to one of the two treatments.  The order that the treatments will be administered is decided (randomly) in advance and is unknown to the physician.  However, a physician observing the outcome of each trial would know which treatment was most likely used, and thus they would know roughly the number of remaining treatments of each type at any given time.  This information might bias results if the physician ruled less healthy patients ineligible for trials where a favored treatment was less likely.  A natural measure of selection bias is the number of times the physician correctly guesses which treatment will be used next.  Blackwell and Hodges~\cite{BH} showed that $m+\half \sqrt{\pi m}-\half+O(1/m)$ correct guesses are made under optimal guessing.  Note that this setting corresponds to $n=2$ in our notation with either Complete or Yes/No feedback. (These two are the same when $n=2$.)

Independently, the same problem is studied by Ethier and Levin~\cite{EL}  as part of their work in evaluating card counting strategies in casino games such as Blackjack, Baccarat, and Trente et Quarante.  As cards are turned up during play, the deck changes composition,  so betting levels and actual strategy changes can make some games favorable. Exactly the same formula given by Blackwell and Hodges appears in Ethier~\cite[Problem 11.15]{Ethier}.  This wonderful book develops many further ideas tailored to these applications.

Another instance of this problem comes from testing if a subject has extrasensory perception (ESP).  A huge number of trials of the following experiment were performed by J.\ B.\ Rhine at the Durham parapsychology lab.  A deck of 25 cards with five copies of five different symbols ($0,\ +,\ \iiint,\ \square,\ *$) is shuffled, and a guessing subject guesses them sequentially as above.  It is common practice to give various kinds of feedback in these experiments, which is sometimes inadvertent if the guessing and sending subject are in the same room.  For extensive references, see Diaconis~\cite{D}.


\subsection{Some numbers.}
The main results in our article are asymptotic, giving results as $m$ and/or $n$ tend to infinity.  It is of interest to obtain exact results for decks of small size, both to get a feel for what the correct asymptotic bounds should be, and to make use of these results in the real-world applications mentioned above. 

For example, consider the classical ESP experiment which corresponds to $m=n=5$.  In \cite{DG}, a direct recursion was used to get the exact answers recorded in Table~\ref{tab:55}. As noted in \cite{D}, for actual ESP experiments, the highest recorded scores essentially never exceeded 8.65, and the highest scores among long-term subjects never exceeded 6.63; so the numbers of Table~\ref{tab:55} help benchmark high scoring experiments.  Much more can be said about this well-studied case; see Diaconis, Gatto, and Graham~\cite{DGG} and Gatto~\cite{G}.

\begin{table}
	\caption{Expected number of correct guesses under an optimal strategy with Yes/No feedback when $m=n=5$.  The results are rounded to two decimal places.}
	\label{tab:55}
	\begin{center}
		\begin{tabular}{ | c | c | c |} 
		\hline
		No feedback & Yes/No feedback & Complete feedback\\ 
		\hline 
		5 & 6.65 & 8.65\\ 
		\hline
		\end{tabular}
	\end{center}
	
\end{table}

Table~\ref{tab:2Various} gives some numbers for $m=2$ and small $n$ under Yes/No feedback using various strategies.  This data was obtained by implementing the algorithms detailed in Gatto~\cite{G}.  The \textit{linear strategy} mentioned in this table is the following: assume at some stage one has incorrectly guessed each $i$ a total of $g_i$ times and that one has correctly guessed each $i$ a total of $n-c_i$ times (so there are $c_i$ copies of $i$ that have not been confirmed which may or may not be among the remaining cards).  Given this, we guess the type $i^*$ such that $c_i+.51g_i$ is maximized (breaking ties at random).  The value .51 was found by experimenting with different choices of parameters. Table~\ref{tab:2Various} shows that, in addition to the greedy strategy being nonoptimal, it is seemingly outperformed by this simple linear strategy. Observe by Table~\ref{tab:2Various} that the greedy and linear strategies are close to optimal for small $n$.  While we suspect that this phenomenon holds for all $n$, we do not know how to prove this.


We find it rather surprising that the simple linear ``card counting'' strategy performs so well.  In the classical ESP setting of $m=n=5$, a similar strategy can be used where one guesses the $i^*$ that maximizes $c_i+.35g_i$.  This simple strategy has expectation 6.6149, which is better than the greedy strategy!  Here .35 is again the result of a computer search, and in general we expect for linear strategies of the form $c_i+\be g_i$ that the optimal value of $\be$ should decrease as $n$ increases.

\begin{table}
	\caption{Expected number of correct guesses with Yes/No feedback under various strategies when $m=2$. The results are rounded to four decimal places.}
	\label{tab:2Various}
	\begin{center}
		\begin{tabular}{ | c | c | c | c|} 
			\hline
			& Optimal  & Greedy & Linear\\ 
			\hline
			$\E[\b{S}_{2,2}]$ & 2.8333 & 2.8333 & 2.8333\\ 
			\hline
			$\E[\b{S}_{2,3}]$ & 3.0111 & 3.0111 & 3.0111\\ 
			\hline 
			$\E[\b{S}_{2,4}]$ & 3.0452 & 3.0333 & 3.0433\\ 
			\hline
			$\E[\b{S}_{2,5}]$ & 3.0467 & 3.0222 & 3.0441\\ 
			\hline 
		\end{tabular}
	\end{center}
	
\end{table}

At a first glance of Table~\ref{tab:2Various}, it may appear as if $\E[\b{S}_{2,n}]$ under the optimal strategy is nondecreasing, but Table~\ref{tab:2Opt} shows that this is not the case.  Note that all of these values seem to be very close to 3.  It is unclear what the true asymptotic value should be, though Theorem~\ref{THM:PLUS} shows that it is as least $2.91$.

\begin{table}
	\caption{Expected number of correct guesses with Yes/No feedback  under an optimal strategy when $m=2$. The results are rounded to four decimal places.}
	\label{tab:2Opt}
	\begin{center}
		\begin{tabular}{ | c | c | c | c| c| c|} 
			\hline
			$n$ & 6 & 7 & 8 & 9 & 10\\ 
			\hline
			$\E[\b{S}_{2,n}]$ & 3.0376 & 3.0323 & 3.0260 & 3.0219 & 3.0186 \\
			\hline 
		\end{tabular}
	\end{center}
	
\end{table}

\subsection{Previous research.}
This article continues research in \cite{DG,DGHS}. In these papers, the problem of determining $\E[\b{S}_{m,n}]$ under both optimal and optimal mis\`ere strategies with Complete feedback was essentially solved.  The authors were able to prove this result in large part due to the optimal and optimal mis\`ere strategies being known when Complete feedback is given.  

Despite not knowing the optimal strategy under Yes/No feedback in general, it was proved in \cite{DGHS} that $\E[\b{S}_{m,n}]$ is at most $m$ plus lower-order terms under the optimal strategy.  More precisely, the following was proved.
\begin{theorem}[\cite{DGHS}, Theorem 1.3]\label{thm:partAsy}
	If $n$ is sufficiently large in terms of $m\ge 2$, then under an optimal strategy with Yes/No feedback,
	\[\E[\b{S}_{m,n}]=m+O(m^{3/4}\log m),\]
	where this bound holds uniformly in $n$.
\end{theorem}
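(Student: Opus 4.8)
The lower bound is immediate and requires no feedback at all: the strategy ``always guess $1$'' scores exactly $m$ on every deck (it is correct precisely at the $m$ positions holding a $1$), so the optimal score is at least $m$. Since the claimed error term is nonnegative, this already settles the lower half of the equality, and all the work is in the upper bound: showing $\E[\b{S}_{m,n}]\le m+O(m^{3/4}\log m)$ for \emph{every} strategy, uniformly in $n$ (and genuinely using $m\ge 2$, since for $m=1$ the excess $e-2$ is a nonzero constant). Fix a strategy, let $\c{F}_{t-1}$ be the Yes/No feedback before round $t$, let $N_t=mn-t+1$, and let $\ol{c}_{i,t}=\E[\,\#\{\tr{copies of }i\tr{ among the undrawn cards}\}\mid\c{F}_{t-1}]$. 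The first step is an exchangeability identity: the feedback constrains only the already-guessed positions $1,\ldots,t-1$, so conditioned on $\c{F}_{t-1}$ the undrawn positions are exchangeable and $\P[\tr{card }t=i\mid\c{F}_{t-1}]=\ol{c}_{i,t}/N_t$. As the guess at round $t$ is $\c{F}_{t-1}$-measurable,
\[\E[\b{S}_{m,n}]\le \sum_{t=1}^{mn}\E\!\l[\frac{\max_i \ol{c}_{i,t}}{N_t}\r].\]

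The naive estimate $\ol{c}_{i,t}\le m$ only gives $\sum_t m/N_t=m\,H_{mn}=\Theta(m\log n)$, which is useless: it is exactly the complete-feedback score, and it grows in $n$. The crux is that Yes/No feedback is too weak for the player to \emph{locate} a type whose posterior count is near $m$. Although a constant fraction of the $n$ types stay completely untouched (true count $m$) throughout the game, the player cannot distinguish them, so for any type about which the feedback is nearly uninformative, $\ol{c}_{i,t}$ sits close to the global average $N_t/L_t$, where $L_t$ is the number of live types; for $n$ large this average is $O(mN_t/n)=o(1)$. The plan is therefore to control how far $\max_i\ol{c}_{i,t}$ can exceed $N_t/L_t$, and to show the total expected excess is $O(m^{3/4}\log m)$. (One may first symmetrize over type labels, so that $\E[\b{S}_{m,n}]=n\,\E[Y_1]$ with $Y_1$ the number of correct guesses of type $1$, which reduces the claim to $\E[Y_1]\le m/n+O(m^{3/4}\log m/n)$.)

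To bound $\max_i\ol{c}_{i,t}$, I would track, for each type, how often it has been guessed and with which outcomes. A type's posterior count is pushed above average only by accumulated NO answers, which rule it out of past positions and so concentrate its unconfirmed copies into the future; a YES, by contrast, removes a copy and lowers $\ol{c}_{i,t}$. The key lemma to prove is a large-deviation bound asserting that, with high probability and simultaneously over all relevant $(i,t)$, $\ol{c}_{i,t}$ exceeds $N_t/L_t$ by only $O(m^{3/4})$: sustaining many NO answers on a single type while its copies genuinely remain undrawn is a rare event, and manufacturing such answers costs guesses, of which only one is available per round. A union bound over types and rounds would supply the logarithmic factor, which must come out as $\log m$ rather than $\log n$; the $m^{3/4}$ scale is what balances the magnitude of a possible boost against its probabilistic cost, and is the natural loss over the $\Theta(\sqrt m)$-scale fluctuations already present at $n=2$ (Blackwell--Hodges).

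The main obstacle is this concentration lemma. Computing $\ol{c}_{i,t}$ exactly is a ratio of permanents of $0/1$ matrices encoding the NO-constraints (as flagged in the introduction via \cite{CDGM}), and these do not factor across types, because a single NO answer simultaneously reshapes the posterior for every value; bounding the resulting cross-type correlations is the delicate analytic step. A second, equally serious difficulty is ensuring the estimate is \emph{uniform in $n$}: a crude per-round bound of the form $O(m^{3/4})/N_t$ summed against $\sum_t 1/N_t=H_{mn}$ would reintroduce a forbidden $\log n$, so the argument must instead show that the excess is localized to a controlled, $n$-independent set of rounds rather than being spread across all $\approx mn$ of them. Making the large-deviation control hold simultaneously over rounds, and robustly as $n\to\infty$, is the heart of the proof of Theorem~\ref{thm:partAsy}.
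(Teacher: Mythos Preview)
This theorem is not proved in the present paper at all: it is quoted from \cite{DGHS} as background, and no argument for it appears here. So there is no ``paper's own proof'' to compare your proposal against.

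That said, what you have written is not a proof but a plan, and you say so yourself: the decisive step is the concentration lemma bounding $\max_i \ol{c}_{i,t}$ above $N_t/L_t$ by $O(m^{3/4})$ with the right uniformity, and you explicitly identify this as the ``main obstacle'' without supplying it. Everything up to and including the inequality $\E[\b{S}_{m,n}]\le \sum_t \E[\max_i \ol{c}_{i,t}/N_t]$ is correct and is indeed the natural starting point, and your diagnosis of the two difficulties (controlling correlated posteriors that are ratios of permanents, and avoiding a spurious $\log n$ from summing $1/N_t$) is accurate. But the proposal stops exactly where the content begins: you have not shown how to establish the large-deviation bound, how to localize the ``excess'' rounds, or why the logarithm comes out as $\log m$ rather than $\log n$. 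As written this is a reasonable heuristic outline of what a proof in the style of \cite{DGHS} ought to look like, not a proof.
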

This shows that the lower bound of Theorem~\ref{THM:LOWER} is close to best possible.  It also shows that for fixed $m$, under an optimal strategy with Yes/No feedback, $\E[\b{S}_{m,n}]$ is bounded as a function of $n$.  This is in sharp contrast to the Complete feedback model, where one can guarantee $\Om(\log n)$ correct guesses in expectation; see \cite{DGHS}.

Theorem~\ref{thm:partAsy} does not given an effective bound on $\E[\b{S}_{m,n}]$ under Yes/No feedback for any fixed $m$.  For example, it does not tell us whether $\E[\b{S}_{2,n}]\le 100$ for $n$ sufficiently large, and it is natural to ask for bounds when $m$ is a fixed small integer. The case $m=1$ was completely solved in \cite{DG}.

\begin{theorem}[\cite{DG}, Theorems 5 and 6]\label{thm:1}
	Under the optimal strategy with Yes/No feedback,
	\[\E[\b{S}_{1,n}]=e-1+O(1/n!)\approx 1.72,\]
	and under the optimal mis\`ere strategy,
	\[\E[\b{S}_{1,n}]=1-e^{-1}+O(1/n!)\approx .632.\]
\end{theorem}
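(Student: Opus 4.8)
The plan is to exhibit one simple strategy for each half of the statement, compute the exact distribution of the score under that strategy, and then argue the strategy is optimal (respectively optimal mis\`ere). The maximizing half is essentially the computation already recorded in the introduction, so the genuinely new work is concentrated in the mis\`ere half and in the two optimality claims.

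For the maximizing bound, take the strategy $\c{G}^+$ that guesses ``$1$'' until correct, then ``$2$'' until correct, and so on. Writing $\mr{pos}(j)$ for the position of card $j$ in $\rand{\pi}$, one checks that $\c{G}^+$ scores at least $k$ exactly when $\mr{pos}(1)<\mr{pos}(2)<\cdots<\mr{pos}(k)$: the player reaches the guess ``$j$'' precisely after card $j-1$ has appeared, and then scores on ``$j$'' iff card $j$ has not yet passed. Since the relative order of any $k$ fixed values is uniform, $\Pr[\b{S}_{1,n}\ge k]=\rec{k!}$, whence $\E[\b{S}_{1,n}]=\sum_{k=1}^n\rec{k!}=e-1+O(1/n!)$. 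That $\c{G}^+$ is in fact optimal is exactly the claim recorded (with reference to \cite{DG}) in the introduction; I would recover it from the fact that for $m=1$ the myopic (greedy) rule is optimal, together with the observation that after a wrong guess of ``$1$'' the value ``$1$'' is the unique card guaranteed to remain in the deck, so greedy re-guesses ``$1$'' and hence coincides with $\c{G}^+$.

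For the mis\`ere bound, take the strategy $\c{G}^-$ that guesses ``$t$'' on turn $t$ so long as every guess so far has been wrong, and that---upon its first correct guess, say of value $\tau$ on turn $\tau$---switches to guessing the now-confirmed-absent value ``$\tau$'' on every remaining turn. The point is that $\c{G}^-$ is correct at most once, since every guess after the first success is a guaranteed miss; and so long as all guesses are wrong the player guesses $1,2,\ldots$ in order, so the first success occurs at turn $t$ iff $\rand{\pi}(t)=t$ while $\rand{\pi}(s)\ne s$ for all $s<t$. Hence $\b{S}_{1,n}=\b{1}[\rand{\pi}\text{ has a fixed point}]$, and with $D_n$ the number of derangements,
\[\E[\b{S}_{1,n}]=1-\f{D_n}{n!}=1-\sum_{k=0}^n\f{(-1)^k}{k!}=\sum_{k=1}^n\f{(-1)^{k-1}}{k!}=1-e^{-1}+O(1/n!).\]

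The crux is optimality of $\c{G}^-$, namely that no strategy attains expected score below $1-D_n/n!$. Here I would argue as follows. Any deterministic strategy is a decision tree branching on the yes/no feedback; let $h=(h_1,\ldots,h_n)\in[n]^n$ be the sequence of guesses made along the all-incorrect branch. If $\b{S}_{1,n}(\pi)=0$ then every response is ``incorrect,'' so the guesses actually made are $h_1,\ldots,h_n$ and $\pi(t)\ne h_t$ for all $t$; conversely any $\pi$ with $\pi(t)\ne h_t$ for all $t$ produces all-incorrect feedback and scores $0$. Thus $\{\b{S}_{1,n}=0\}$ is exactly the event that $\rand{\pi}$ avoids the fixed sequence $h$, and the number of such permutations is the permanent of the $0/1$ matrix with a single forbidden cell $(t,h_t)$ in each row. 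The key lemma is that this count is at most $D_n$, with equality iff $h$ is a permutation: modifying a repeated value of $h$ toward a genuine permutation only increases the permanent, which one sees by expanding along the modified row and using monotonicity of the permanent under adding forbidden cells. Granting this, $\Pr[\b{S}_{1,n}=0]\le D_n/n!$ for every strategy, so $\E[\b{S}_{1,n}]\ge\Pr[\b{S}_{1,n}\ge 1]\ge 1-D_n/n!$; since $\c{G}^-$ attains equality (its all-incorrect branch is the permutation $1,2,\ldots,n$, and its score is a $0/1$ variable), it is optimal mis\`ere. The permanent lemma, together with the (less elementary) optimality of the greedy rule in the maximizing half, are the two steps I expect to require the most care; randomized strategies are handled in both halves by averaging, since $\E[\b{S}_{1,n}]$ is linear in the strategy.
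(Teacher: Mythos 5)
Your mis\`ere half is essentially correct and, modulo writing out details, complete. The reduction of the event $\{\b{S}_{1,n}=0\}$ to the all-incorrect branch $h=(h_1,\ldots,h_n)$ of a deterministic strategy, the identification of $\Pr[\b{S}_{1,n}=0]$ with the normalized count of permutations avoiding one forbidden cell per row, and the exchange argument (replace a repeated value of $h$ by an unused value $w$; expanding along the modified row, the change in the count is the difference of two restricted counts, and fixing $\pi(t_2)=v$ deletes the forbidden cell in the other row carrying $v$, so that difference is nonnegative) do give $\Pr[\b{S}_{1,n}=0]\le D_n/n!$ with $D_n$ the number of derangements; since $\E[\b{S}_{1,n}]\ge\Pr[\b{S}_{1,n}\ge 1]$ and your $\c{G}^-$ has $\{0,1\}$-valued score attaining $1-D_n/n!$, this pins the mis\`ere value exactly. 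Note that the paper itself gives no proof of Theorem~\ref{thm:1} (it is quoted from \cite{DG}), and that your permanent lemma is a sharp, self-contained $m=1$ version of the paper's Lemma~\ref{lem:X}, which for general $m$ defers the analogous maximization to Theorem 4 of \cite{CDGM}; so this half is a genuine and worthwhile alternative argument.

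The gap is in the maximizing half. Your computation $\E[S(\c{G}^+,\rand{\pi})]=\sum_{k=1}^n\rec{k!}$ is fine, but the optimality of $\c{G}^+$ is made to rest on ``the fact that for $m=1$ the myopic (greedy) rule is optimal,'' which you never prove. This is not a generic principle one may invoke: a central point of this paper (see Section 3 and Table 2) is that the greedy rule is \emph{not} optimal once $m\ge 2$, so its optimality at $m=1$ is a genuine theorem --- indeed it is precisely the content of Theorem 5 of \cite{DG}, i.e., the very statement you are trying to prove. Your observation that greedy coincides with $\c{G}^+$ (after a wrong guess, ``1'' is the unique card certain to remain and is strictly the most likely next value) is correct, but it only reduces one unproved assertion to another. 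What is missing is any upper-bound argument showing that no strategy exceeds $\sum_{k\le n}\rec{k!}$ --- for instance an argument that every strategy satisfies $\Pr[\b{S}_{1,n}\ge k]\le \rec{k!}$ for all $k$, or an induction over information states; nothing of this sort is supplied, and it does not follow from the averaging remark you use for randomized strategies. Until that step is filled in, the first half of the theorem remains unproven in your write-up.
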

The proof of Theorem~\ref{thm:1} relies on an optimal strategy which is easy to analyze when $m=1$, and without this it seems difficult to nail down asymptotic values of $\E[\b{S}_{m,n}]$ in general.



\section{Practical Strategies and Computational Data.}\label{sec:prac}
Perhaps the simplest strategy to use in the Yes/No feedback model is the strategy where one guesses ``1'' every round.  This strategy always gives exactly $m$ correct guesses.  Of course, this strategy is somewhat silly.  One can always do at least as well by using the strategy: guess card ``1'' until the player is told $m$ guesses were correct, then guess card ``2'' until the player is told $m$ guesses were correct, and so on.  We call this the \textit{safe strategy} and denote it by $\c{S}$.  While this does better than the trivial strategy of guessing ``1'' every round, heuristic reasoning suggests that it does not do much better when $m$ is large.  

Indeed, the probability that the safe strategy gives at least $m+k$ correct guesses for $k\le m$ is exactly ${2m-k\choose m}/{2m\choose m}$ (since this happens whenever the $m$ ``1''s appear in the first $2m-k$ positions in the deck).  Thus the expected number of correct guesses of ``1'' and ``2'' card types using the safe strategy is 
\begin{align*}\sum_{k=0}^{2m}\Pr[\b{S}_{m,n}\ge k]&=m-1+\sum_{k=0}^m \f{{2m-k\choose m}}{{2m\choose m}}\\&=m-1+\f{{2m+1\choose m+1}}{{2m\choose m}}\\&=m-1+\f{2m+1}{m+1}=m+1-\rec{m+1}.\end{align*}
For $n>2$ the probability of guessing any ``3'' correctly is less than ${2m\choose m}^{-1}$, so for large $m$ the safe strategy will typically only guess ``1'' and ``2.''  Thus we do not expect much more than $m+1$ correct guesses using the safe strategy.

Another natural strategy is the \textit{shifting strategy} $\c{F}$ where the player guesses ``1'' until they get a correct guess, then ``2'' until they get a correct guess, and so on; and upon guessing an ``$n$'' correctly, they go back to guessing ``1''s, and then ``2''s, and so on. Observe that the safe and shifting strategies look identical until the first ``1'' is guessed correctly.  Because of this, the player can choose which strategy they wish to use after seeing the first ``1,'' where intuitively they should use the shifting strategy if the first ``1'' shows up early and the safe strategy otherwise.  

To this end, we define the \textit{$\gam$-shifting strategy} $\c{F}_\gam$ by guessing ``1'' until a correct guess is made at time $t$.  If $t\ge \gam mn$, the player proceeds as in the safe strategy, and otherwise they proceed as in the shifting strategy.  For example, $\c{F}_0$ is the safe strategy and $\c{F}_1$ is the shifting strategy, so $\c{F}_\gam$ serves as a sort of interpolation between these two strategies.


We next define the \textit{halfway strategy} $\c{H}^+$ by guessing ``1'' for the first $\half mn$ trials.  If at most $\half m$ cards have been guessed correctly, then keep guessing ``1'' (guaranteeing $m$ points at the end of the game), and otherwise guess ``2'' for the rest of the game. The intuition is that if there are many copies of ``1'' in the first half of the deck, then there will be slightly more copies of ``2'' (or any other card type) in the second half of the deck.  This intuition is made rigorous in Lemma~\ref{L-K2}.

We now turn to strategies which get few correct guesses.  Analogous to $\c{H}^+$, we define the \textit{halfway strategy} $\c{H}^-$ by guessing ``1'' for the first half of the game, then to keep guessing ``1''s if more than $\half m$ correct guesses were made, and otherwise guessing ``2''s the rest of the game.  Finally, we define the \textit{avoiding strategy} $\c{A}$ by guessing ``1,'' then ``2,'' then ``3,'' and so on until some ``$k$'' is guessed correctly, at which point one guesses ``$k$'' for the rest of the game.  If the player does not guess any of the first $n$ cards correctly, then they again guess ``1,'' then ``2,'' and so on until a card ``$k$'' is guessed correctly, and then they guess ``$k$'' for the rest of the game.

In Table~\ref{tab:strats}, we present computational data for most of these strategies.  The first two rows are the exact value of $\E[\b{S}_{m,n}]$ using the stated strategy with Yes/No feedback.  The next two rows represent the sample mean of $S(\c{G},\rand{\pi})$ obtained after sampling $t=10^6$ permutations of $\S_{m,n}$.  The entries corresponding to $\c{F}_\gam$ indicate which value of $\gam$ was used, and these values were chosen to roughly maximize the expectation.  Each entry is rounded after three decimal places.
 \begin{table}
 \caption{(Simulated) values of $\E[\b{S}_{m,n}]$ under various strategies.}
\label{tab:strats}
\begin{center}
	\begin{tabular}{ | c | c | c | c | c| c|} 
		\hline
		& $\c{S}$ &  $\c{F}$ & $\c{F}_\gam$ & $\c{H}^+$& $\c{H}^-$\\ 
		\hline
		$m=2,n=6$ (exact) & 2.737 & 2.751 & $\c{F}_{.3}$: 2.941 & 2.212& 1.682\\ 
		\hline
		$m=3,n=5$ (exact)& 3.772 & 3.753 & $\c{F}_{.25}$: 4.006 & 3.431& 2.569\\ 
		\hline
		$m=4,n=10\ (t=10^6)$ & 4.806 & 4.831 & $\c{F}_{.2}$: 5.100 &  4.370 & 3.585\\ 
		\hline 
		$m=5,n=20\ (t=10^6)$ & 5.835  & 5.856 & $\c{F}_{.15}$: 6.136 & 5.482 & 4.516\\ 
		\hline 
	\end{tabular}
\end{center}

\end{table}

The main benefit of the strategies of this section is that we can give rigorous bounds on their performance (or more precisely, on technical variants of the strategies which are easier to analyze).  For example, we provided no  computational data for the avoiding strategy, since it turns out that we can determine its expectation asymptotically for all $m$.

\begin{prop}\label{prop:A}
	For any fixed $m$, using the avoiding strategy with Yes/No feedback gives
	\[\E[\b{S}_{m,n}]\sim m-1+m^{-1}-m^{-1}e^{-m}.\]
\end{prop}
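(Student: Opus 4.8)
The plan is to reduce the score of the avoiding strategy $\c{A}$ to a single random quantity and then evaluate its expectation through a Poisson/exponential limit as $n\to\infty$.

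First I would pin down the score exactly for each deck $\pi$. Under $\c{A}$ the player makes only incorrect guesses while scanning $1,2,\ldots,n,1,2,\ldots$, and the first correct guess---say on type $k$ at the (random) position $T$---is the moment the player locks in and guesses $k$ for the remainder of the game. From position $T$ onward the player collects exactly the copies of $k$ lying in positions $T,\ldots,mn$. Writing $j$ for the number of copies of $k$ among the first $T-1$ positions, the score is therefore $m-j$ if a correct guess is ever made, and $0$ otherwise. Thus $\E[S(\c{A},\rand{\pi})]=\E[(m-j)\b{1}[\text{lock-in}]]$, and everything reduces to the joint law of the lock-in time $T$ and the count $j$.

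Next I would establish the limiting law of $T$. For a uniformly random deck each position $s$ is a ``hit'' (card equals guess) with probability exactly $m/(mn)=1/n$, so the expected number of hits over the whole deck is $m$; more importantly, the hit indicators are asymptotically independent, so the hit locations converge to a Poisson process of rate $1$ on the rescaled interval $[0,m]$, where position $t$ corresponds to $x=t/n$. Consequently $\tau:=T/n$ converges to an $\mathrm{Exp}(1)$ variable truncated at $m$: it has density $e^{-x}$ on $[0,m]$ together with an atom $e^{-m}$ at ``never lock in,'' so in particular $\Pr[\text{never lock in}]\to e^{-m}$. I would then describe the conditional law of $j$: given lock-in at $\tau=x$, the remaining $m-1$ copies of $k$ sit at essentially uniform positions, each lying before position $T\approx xn$ with probability $x/m$, while the non-hit conditioning constrains only the $O(1)$ positions at which $k$ was actually guessed and so is negligible. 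Hence, conditionally, $j$ converges to a $\mathrm{Binomial}(m-1,x/m)$ variable, giving $\E[j\mid\tau=x]\to(m-1)x/m$. Combining this with the law of $\tau$ and using that the score is bounded by $m$ (so bounded convergence applies), I obtain
\[
\E[S(\c{A},\rand{\pi})]\longrightarrow\int_0^m\Big(m-\tfrac{m-1}{m}\,x\Big)e^{-x}\,dx=m-1+m^{-1}-m^{-1}e^{-m},
\]
where the last equality follows from $\int_0^m e^{-x}\,dx=1-e^{-m}$ and $\int_0^m xe^{-x}\,dx=1-(m+1)e^{-m}$.

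The \emph{main obstacle} is making the two convergence statements rigorous, since the cards are sampled without replacement and the relevant events are conditioned on a growing prefix of non-hits. Concretely, I must show that the conditional hit probability at position $s$, given no hit so far, remains $1/n+o(1/n)$ uniformly---equivalently, that conditioning on no earlier hits does not measurably skew the remaining multiplicity of the type being guessed---and that this same conditioning leaves the placement of the other copies of the eventual lock-in type asymptotically uniform, so that the $\mathrm{Binomial}(m-1,x/m)$ approximation holds. I expect these to follow from controlling the pairwise and higher factorial moments of the hit indicators (a Chen--Stein or second-moment argument) together with the fact that each scan step reveals only one of the $\Theta(n)$ cards; verifying that all error terms are $o(1)$ uniformly in the position is the delicate part.
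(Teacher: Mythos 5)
Your proposal is correct in outline and is, at heart, the paper's own argument, organized as a qualitative weak-convergence statement rather than a quantitative estimate. The score identity $S=(m-j)\b{1}[\mathrm{lock\mbox{-}in}]$, the truncated-exponential law of the lock-in time, and the conditional expectation $m-\f{(m-1)x}{m}$ are precisely the content of the paper's Lemma~\ref{lem:approx}, which asserts the pointwise bounds $\Pr[f(\rand{\pi})=k]=e^{-k/n}n^{-1}$ and $\Pr[f(\rand{\pi})=k]\,\E[S(\c{A},\rand{\pi})\mid f(\rand{\pi})=k]=\l(m-\f{(m-1)k}{mn}\r)e^{-k/n}n^{-1}$, each up to an error $\ep n^{-1}+O_{\ep,m}(n^{-2})$; the paper then converts the resulting sums into your integral via its sum-to-integral lemma (Lemma~\ref{lem:integral}). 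The ``main obstacle'' you flag is resolved in the paper by exactly the route you anticipate: the joint hit probabilities $\Pr[A_{t_1}\cap\cdots\cap A_{t_\ell}]$ are computed exactly by counting (they equal $(mn-\ell)!\,m^\ell/(mn)!$ when the guessed types are distinct), and truncated inclusion--exclusion (the Bonferroni inequalities) -- which is nothing but factorial-moment control -- yields the exponential asymptotics; this is the rigorous form of your Poisson-process claim. Two details your sketch elides, both of which the paper addresses: (i) once $k>n$ the scan wraps around, so a tuple of positions can guess the same card type (indices congruent mod $n$); the joint probability formula changes for such tuples, but there are only $O(k^{\ell-1})$ of them versus $\Theta(k^\ell)$ generic ones, so they are absorbed into the error term; (ii) a pure second-moment argument cannot deliver $\Pr[\mathrm{no\ hit\ in\ }[1,xn]]\to e^{-x}$ -- one needs factorial moments of every order (or a Chen--Stein coupling), so the ``second-moment'' half of your proposed toolkit should be discarded in favor of the full moment computation. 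Also, where you invoke a conditional $\mathrm{Binomial}(m-1,x/m)$ law for $j$, the paper gets the same number more cheaply by linearity: given $f=k$, the expected score is $1+(mn-k)\Pr[A_*\mid\cdot]$ for the single event $A_*$ that position $k+1$ holds the locked-in type, which avoids proving any conditional distributional limit. What your formulation buys is brevity (weak convergence plus bounded convergence in place of explicit $\ep$-and-$L$ bookkeeping); what the paper's formulation buys is uniform quantitative error terms, which it reuses verbatim in the proof of Theorem~\ref{thm:minus}.
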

\section{Proof of Theorem~\ref{THM:LOWER}: the Halfway Strategies $\c{H}^\pm$.}\label{sec:lower}
In this section, we prove Theorem~\ref{THM:LOWER} using a technical variant of the halfway strategies $\c{H}^\pm$, which we denote by $\c{H}^\pm_*$ and define as follows.  Guess ``1'' a total of $\floor{mn/2}$ times.  If at most most $\half m+\half \sqrt{m}$ correct guesses have been made, then we continue to guess ``1'' for the rest of the game, otherwise we guess ``2'' for the rest of the game otherwise.  Similarly, $\c{H}_*^-$ is defined by guessing ``1'' a total of $\floor{mn/2}$ times, then continuing if at most $\half m-\half \sqrt{m}$ correct guesses have been made, and with ``2'' being guessed otherwise.

To show that $\c{H}^+_*$ gives the correct lower bound, we first show that the probability of getting at least $\half m+\half \sqrt{m}$ correct guesses in the first phase is relatively large.

\begin{lem}\label{L-K1}
	For $\pi \in \S_{m,n}$, let $K_1(\pi)$ denote the number of $t\le \floor{mn/2}$ with $\pi_t=1$.  Then for $m\ge 2$ and $n\ge 8 m$ and all $0\le k\le m$, we have 
	\begin{align*} \Pr[K_1(\rand{\pi})=k]\ge \half \cdot 2^{-m}{m\choose k}.\end{align*}
\end{lem}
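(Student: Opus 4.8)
The plan is to identify the exact distribution of $K_1(\rand{\pi})$ and then compare it term by term against the binomial weights $2^{-m}\binom{m}{k}$. First I would observe that in a uniformly random word $\rand{\pi}\in\S_{m,n}$, the set $S$ of positions occupied by the $m$ copies of the symbol ``1'' is a uniformly random $m$-element subset of $[mn]$: once we fix $S$ to be any particular set, the number of ways to fill the remaining $mn-m$ positions with the other symbols is the same for every choice of $S$. Writing $N:=\floor{mn/2}$, this makes $K_1(\rand{\pi})=|S\cap[N]|$ hypergeometric, so that
\[\Pr[K_1(\rand{\pi})=k]=\frac{\binom{m}{k}\binom{mn-m}{N-k}}{\binom{mn}{N}}.\]
Since the factor $\binom{m}{k}$ appears on both sides of the desired inequality, the lemma reduces to proving the clean statement $\binom{mn-m}{N-k}/\binom{mn}{N}\ge 2^{-m-1}$ for every $0\le k\le m$.

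Next I would expand this ratio as a quotient of two products of exactly $m$ terms each, namely
\[\frac{\binom{mn-m}{N-k}}{\binom{mn}{N}}=\frac{\prod_{i=0}^{k-1}(N-i)\cdot\prod_{j=0}^{m-k-1}(mn-N-j)}{\prod_{l=0}^{m-1}(mn-l)},\]
which has $k+(m-k)=m$ factors in the numerator and $m$ in the denominator. Using $N\ge mn/2-1$, $mn-N\ge mn/2$, and $k\le m$, each numerator factor is at least $mn/2-m$, while each denominator factor is at most $mn$. Hence the ratio is at least $(mn/2-m)^m/(mn)^m=\l(\half-\rec{n}\r)^m=2^{-m}(1-2/n)^m$. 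Bernoulli's inequality together with the hypothesis $n\ge 8m$ then gives $(1-2/n)^m\ge 1-2m/n\ge 1-\quart=\f{3}{4}$, so the ratio is at least $\f{3}{4}2^{-m}\ge 2^{-m-1}$, and multiplying back by $\binom{m}{k}$ completes the proof.

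There is no deep conceptual obstacle here; the lemma is essentially the statement that sampling $m$ cards without replacement from a deck split almost exactly in half behaves like $m$ independent fair coins, up to the slack factor $\half$, and the hypothesis $n\ge 8m$ is precisely what controls the without-replacement corrections. The one point that will require genuine care is the per-factor lower bound $mn/2-m$: I must account for the floor in $N=\floor{mn/2}$ and verify that even the smallest factor (where we subtract as much as $m-1$ from $N$ or from $mn-N$) remains above $mn/2-m$, and that this quantity is strictly positive so that the bounds are legitimate. Both facts follow from $n\ge 8m$, and tracking them carefully is what makes the final constant come out to exactly $\half$ rather than something messier.
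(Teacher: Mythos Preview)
Your proof is correct and follows the same overall strategy as the paper: identify the distribution of $K_1$ as hypergeometric and bound the resulting ratio of binomial coefficients by elementary inequalities. The execution, however, is somewhat cleaner than the paper's. The paper writes the probability as $\binom{N}{k}\binom{mn-N}{m-k}/\binom{mn}{m}$, bounds each binomial via $(1-r^2/N)N^r/r!\le\binom{N}{r}\le N^r/r!$, and then splits into the cases $mn$ even and $mn$ odd, the latter requiring a separate numerical check using $m\ge 3$. By choosing the dual hypergeometric representation $\binom{m}{k}\binom{mn-m}{N-k}/\binom{mn}{N}$, you make the factor $\binom{m}{k}$ appear automatically and reduce the problem to bounding a single product of $m$ ratios, each at least $(1/2-1/n)$; this handles the floor in $N=\floor{mn/2}$ uniformly without a parity split and even yields the slightly better constant $3/4$ in place of $1/2$. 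Both arguments rest on the same idea, but your packaging is tidier.
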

That is, the probability of having exactly $k$ of the $m$ ``1''s appearing in the first half of $\rand{\pi}$ is roughly the probability of having $k$ heads in a series of $m$ coin tosses.

\begin{proof}
	First assume $mn$ is even.  Then \begin{equation}\Pr[K_1(\rand{\pi})=k]={mn/2\choose k}{mn/2\choose m-k}/{mn\choose m}.\label{E-K1}\end{equation}  We recall the bounds ${N\choose r}\le N^r/r!$ and \[{N\choose r}\ge \f{(N-r)^r}{r!}=(1-r/N)^r \f{N^r}{r!}\ge (1-r^2/N) \f{N^r}{r!},\] where the last inequality uses $(1+x)^r\ge 1+rx$, which holds for $x\ge -1$ and $r\ge 1$.  Using these bounds and \eqref{E-K1}, we find for $0\le k\le m$ and $n\ge 8m$ that \begin{align*}\Pr[K_1(\rand{\pi})=k]&\ge \f{(mn/2)^m m!}{(mn)^m k!(m-k)!}(1-2k^2/mn)(1-2(m-k)^2/mn)\\&\ge 2^{-m} {m\choose k}\cdot (1-2m/n)^2 \ge 2^{-m} {m\choose k}\cdot \half.\end{align*}
	
	This completes the proof for $mn$ even.  If $mn$ is odd, then a similar analysis as before gives
	 \begin{align*}\Pr[K(\rand{\pi})=k]&\ge 2^{-m}{m\choose k}\cdot \f{(mn-1)^k(mn+1)^{m-k}}{(mn)^m}(1-2m^2/(mn-1))^2\\ &\ge 2^{-m}{m\choose k}\cdot (1-1/mn)^m(1-2m^2/(mn-1))^2.\end{align*}
	Using $(1+x/r)^r\ge 1+x$, $n\ge 8m$, as well as $mn$ odd and $m\ge 2$ implying $m\ge 3$,  we find that the above quantity is at least
	\[2^{-m}{m\choose k}\cdot (1-1/8m)(1-2m^2/(8m^2-1))^2\ge 2^{-m}{m\choose k}\cdot (23/24)(53/71)^2,\]
	which is greater than $2^{-m}{m\choose k}\cdot \half$ as desired.
\end{proof}
We show that the bound in Lemma~\ref{L-K1} is large for $k\ge \half m+\half \sqrt{m}$ by using the following anti-concentration result for binomial random variables.

\begin{lem}\label{lem:CLT}
	Let $\b{B}_m$ denote a binomial random variable with $m$ trials and probability of success $1/2$.  Then for all $m$, 
	\[\Pr\l[\b{B}_m\ge \half m+\half \sqrt{m}\r]\ge \f{7}{64}.\]
\end{lem}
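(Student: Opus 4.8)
The plan is to split the range of $m$ into a regime of small $m$, dispatched by a finite computation, and a regime of large $m$, dispatched by a quantitative central limit theorem. The choice of the constant $\f{7}{64}$ is explained by the fact that equality holds at $m=6$: there the threshold $\half m+\half\sqrt m=3+\half\sqrt6\approx 4.22$ rounds up to $5$, and $\Pr[\b{B}_6\ge 5]=(\binom 65+\binom 66)/2^6=\f{7}{64}$. Thus $m=6$ is the extremal case, and any correct argument must treat it as such; a quick check shows all nearby $m$ give a strictly larger probability (e.g. $m=3$ gives $\f 8{64}$, $m=7$ gives $\f{29}{128}$).

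First I would record the one observation that decouples the (parity-dependent, annoying) rounding from the asymptotics. Write $k=\ceil{\half m+\half\sqrt m}$, so that the event $\{\b{B}_m\ge \half m+\half\sqrt m\}$ equals $\{\b{B}_m\ge k\}$. Since $k$ is a ceiling, $k-1<\half m+\half\sqrt m$, hence $k-1-\half m<\half\sqrt m=\sig$, where $\sig=\half\sqrt m$ is the standard deviation of $\b{B}_m$. In other words, the standardized threshold $\f{k-1-\half m}{\sig}$ is strictly below $1$ for \emph{every} $m$, so the rounding always works in our favor.

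Next, for large $m$ I would invoke the Berry--Esseen inequality. As $\b{B}_m$ is a sum of $m$ i.i.d.\ centered Bernoulli$(\half)$ variables, for which the ratio of the third absolute moment to the cube of the standard deviation equals $1$, Berry--Esseen yields $\bigl|\Pr[\b{B}_m\le x]-\Phi(\f{x-\frac12 m}{\sig})\bigr|\le \f{C}{\sqrt m}$ for an absolute constant $C$. Applying this at $x=k-1$ and using the previous paragraph,
\[\Pr[\b{B}_m\ge k]=1-\Pr[\b{B}_m\le k-1]\ge 1-\Phi\l(\tfrac{k-1-\half m}{\sig}\r)-\f{C}{\sqrt m}> 1-\Phi(1)-\f{C}{\sqrt m}.\]
Because $1-\Phi(1)\approx .1587$ exceeds $\f{7}{64}\approx .1094$ by a definite margin, this lower bound is at least $\f{7}{64}$ once $\f{C}{\sqrt m}\le 1-\Phi(1)-\f 7{64}$; solving this with an explicit constant (the best known gives $C<.4748$) lets one take the threshold to be $m_0=93$.

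Finally, for the finitely many values $m<m_0$ I would compute $\Pr[\b{B}_m\ge \half m+\half\sqrt m]$ directly as a partial sum of binomial coefficients and check that each is at least $\f{7}{64}$, with the minimum at $m=6$. The only genuine obstacle is the interface between the two regimes: one must make the Berry--Esseen error fully explicit, and thereby commit to numerical values of $C$ and of $1-\Phi(1)$, so that $m_0$ is an honest finite number; after that the small range is a routine, if slightly tedious, verification. A more self-contained alternative that avoids citing Berry--Esseen is to pass by the symmetry $j\mapsto m-j$ to the complementary central-window probability $\Pr[|\b{B}_m-\half m|<\sig]$ and bound it using the Gaussian-type decay $\binom{m}{m/2+i}\big/\binom{m}{m/2}\le e^{-2i^2/(m+2i)}$ together with $\binom{m}{\floor{m/2}}2^{-m}\le\sqrt{2/(\pi m)}$; this shows the window probability tends to $2\Phi(1)-1\approx .683<\f{25}{32}$, which is equivalent to the claim, but turning the sum-to-integral comparison into an explicit threshold takes more care.
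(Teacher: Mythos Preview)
Your proposal is correct and follows essentially the same approach as the paper: a quantitative normal approximation handles large $m$, and a finite computation disposes of the remaining cases, with $m=6$ identified as the extremal case. The only notable difference is that the paper cites a normal-approximation bound tailored to the symmetric binomial (Hipp--Mattner, giving error $\le 1/\sqrt{2\pi m}$), which yields a slightly smaller cutoff of $m\ge 68$ in place of your $m_0=93$ from the general Berry--Esseen constant; the paper also passes first by symmetry to the lower tail rather than handling the ceiling directly as you do.
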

We note that this bound is best possible by considering $m=6$.
\begin{proof}
	By the symmetry of the binomial distribution, it suffices to prove
	\[\Pr\l[\b{B}_m\le \half m-\half \sqrt{m}\r]\ge \f{7}{64}.\]
	By \cite[Corollary 1.2]{HM}, for all $m$ we have \[\l|\Pr\l[\b{B}_m\le \half m-\half \sqrt{m}\r]-\Phi(-1)\r|\le \rec{\sqrt{2\pi m}},\]
	where $\Phi(x)=\rec{\sqrt{2\pi}} \int_{-\infty}^x e^{-t^2/2}dt$ is the cumulative distribution function of a standard normal distribution.  This implies
	\[\Pr\l[\b{B}_m\le \half m-\half \sqrt{m}\r]\ge .158+\rec{\sqrt{2\pi m}},\] 
	which gives the desired bound for $m\ge 68$, and one can verify the result for smaller $m$ by aid of a computer.
\end{proof}

Finally, we show that conditional on getting at least $\half m+\half \sqrt{m}$ correct guesses in the first phase of the strategy, one expects to guess ``2'' correctly at least $\half m$ times in the second phase.
\begin{lem}\label{L-K2}
	For $\pi \in \S_{m,n}$, let $K_2(\pi)$ denote the number of $t>\floor{mn/2}$ with $\pi_t=2$, and define $K_1(\pi)$ as in Lemma~\ref{L-K1}.  If $k\ge \half m$, then \[\E[K_2(\rand{\pi})|K_1(\rand{\pi})=k]\ge \half m,\]
	and if $k< \half m$,
	\[\E[K_2(\rand{\pi})|K_1(\rand{\pi})=k]\le \half m.\]
\end{lem}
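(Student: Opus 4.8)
The plan is to pin down the conditional expectation $\E[K_2(\rand{\pi})|K_1(\rand{\pi})=k]$ exactly and then compare it with $\half m$. Write $a=\floor{mn/2}$ for the size of the first half and $b=mn-a$ for the size of the second half. The key observation is that, conditional on $K_1(\rand{\pi})=k$, the problem reduces to a single hypergeometric count. Indeed, the event $\{K_1=k\}$ depends only on which positions hold a ``1'': it says that exactly $k$ of the $m$ ``1''s lie in the first $a$ positions and hence $m-k$ lie in the last $b$ positions. Since the uniform distribution on $\S_{m,n}$ is exchangeable, conditioning on any fixed placement of the ``1''s leaves the remaining $mn-m$ positions filled by a uniformly random word with $m$ copies each of $2,\ldots,n$; in particular the $m$ ``2''s occupy a uniformly random $m$-subset of the $mn-m$ non-``1'' positions, and this description is identical for every placement that puts $k$ ``1''s in the first half.

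First I would record that, given $K_1=k$, the number of non-``1'' positions in the second half is the deterministic quantity $b-(m-k)$. By linearity of expectation over the $m$ ``2''s, each of which lands in the second half with probability equal to the fraction of non-``1'' positions lying there, this yields the exact formula
\[\E[K_2(\rand{\pi})|K_1(\rand{\pi})=k]=m\cdot\f{b-(m-k)}{mn-m}.\]

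Next I would compare this against $\half m$. Dividing by $m$ and clearing denominators, the inequality $\E[K_2|K_1=k]\ge\half m$ is equivalent to $2\l(b-m+k\r)\ge mn-m$, that is, to $2b+2k\ge mn+m$, and the reverse inequality $\E[K_2|K_1=k]\le\half m$ corresponds to $2b+2k\le mn+m$. It then remains to split on the parity of $mn$. If $mn$ is even then $b=mn/2$, so the condition reads $2k\ge m$, while if $mn$ is odd then $b=(mn+1)/2$, so it reads $2k\ge m-1$. In either case a short check for integer $k$ shows that this threshold coincides with the dichotomy $k\ge\half m$ versus $k<\half m$ claimed in the statement; the boundary regimes, where $m$ and $mn$ have opposite parities, are exactly the places where one verifies that the equality case falls on the correct side of the inequality.

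The computations here are all elementary, so the only real care needed is in the first step: justifying that conditioning on $K_1=k$ genuinely produces a clean hypergeometric law for the ``2''s, with a count that is independent of how the ``1''s happen to be arranged within each half. I expect this exchangeability reduction to be the main (though modest) obstacle; once it is in place, the formula and the parity bookkeeping in the last step amount to checking a handful of boundary cases rather than any substantive difficulty.
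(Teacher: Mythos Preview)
Your proof is correct and is essentially the same as the paper's. Both compute the exact value $\E[K_2\mid K_1=k]=\dfrac{b-m+k}{n-1}$ and then compare it to $\tfrac{m}{2}$ via a parity split; the only cosmetic difference is that the paper obtains this formula by the symmetry $\E[K_2\mid K_1=k]=\E[K_i\mid K_1=k]$ for all $i\ge 2$ (so that $(n-1)\E[K_2\mid K_1=k]$ equals the deterministic count of non-``1'' cards in the second half), whereas you phrase the same exchangeability as a hypergeometric placement of the ``2''s among the non-``1'' slots.
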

\begin{proof}
    For $i\ge 2$ define $K_i(\pi)$ to be the number of $t>\floor{mn/2}$ with $\pi_t=i$.  Observe that for all $k$ and $i\ge 2$, we have $\E[K_2(\rand{\pi})|K_1(\rand{\pi})=k]=\E[K_i(\rand{\pi})|K_1(\rand{\pi})=k]$.  Thus
    \begin{align*}(n-1)\cdot \E[K_2(\rand{\pi})|K_1(\rand{\pi})=k]&=\E\big[\sum_{i\ge 2} K_i(\rand{\pi})|K_1(\rand{\pi})=k\big]\\&=mn-\floor{mn/2}-m+k,\end{align*}
    where this last step used that the expectation is exactly the number of cards among the last $mn-\floor{mn/2}$ that are not of type ``1,'' which is deterministically equal to $mn-\floor{mn/2}-m+k$.  
    
    With this, if $k\ge \half m$ we find \[\E[K_2(\rand{\pi})|K_1(\rand{\pi})=k]\ge \rec{n-1}(mn-mn/2-m/2)=\half m,\]
	and similarly if $k<\half m$, then $k$ being an integer implies $k\le \half m-\half$, and we have
	\[\E[K_2(\rand{\pi})|K_1(\rand{\pi})=k]\le \rec{n-1}(mn-(mn-1)/2-m/2-1/2)=\half m,\]
	proving the result.

\end{proof}

With all this we can prove Theorem~\ref{THM:LOWER}, which for the convenience of the reader we restate here.
\begin{thm}
	For all $m$ and $n\ge 8m$:
	\begin{itemize}[leftmargin=.3in]
		\item Under the optimal strategy with Yes/No feedback, \[\E[\b{S}_{m,n}]\ge m+\rec{40}\sqrt{m}.\]
		\item Under the optimal mis\`ere strategy with Yes/No feedback, \[\E[\b{S}_{m,n}]\le m-\rec{40}\sqrt{m}.\]
	\end{itemize}
\end{thm}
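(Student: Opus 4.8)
The plan is to prove the two inequalities by analyzing the strategies $\c{H}^+_*$ and $\c{H}^-_*$. Since the optimal strategy maximizes $\E[\b{S}_{m,n}]$, it suffices to show $\E[S(\c{H}^+_*,\rand{\pi})]\ge m+\rec{40}\sqrt m$, and since the optimal mis\`ere strategy minimizes $\E[\b{S}_{m,n}]$, it suffices to show $\E[S(\c{H}^-_*,\rand{\pi})]\le m-\rec{40}\sqrt m$. I write $K_1=K_1(\rand{\pi})$ and $K_2=K_2(\rand{\pi})$ as in Lemmas~\ref{L-K1} and~\ref{L-K2}, set $\tau=\half m+\half\sqrt m$, and analyze the version of $\c{H}^+_*$ that switches to guessing ``2'' exactly on the event $\c{E}=\{K_1\ge \tau\}$ (taking the threshold closed so that Lemma~\ref{lem:CLT} applies verbatim). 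The first step is the bookkeeping identity: on $\c{E}^c$ the player guesses ``1'' the whole game and finishes with exactly $m$ correct guesses, while on $\c{E}$ the player finishes with $K_1+K_2$ correct guesses, so
\[\E[S(\c{H}^+_*,\rand{\pi})]=m\,\Pr[\c{E}^c]+\E[(K_1+K_2)\b{1}_{\c{E}}]=m+\E[(K_1+K_2-m)\b{1}_{\c{E}}].\]
Thus it remains to bound $\E[(K_1+K_2-m)\b{1}_{\c{E}}]$ from below by $\rec{40}\sqrt m$.

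I would estimate this expectation by conditioning on $K_1$. On $\c{E}$ we have $K_1=k\ge\tau\ge\half m$, so the first half of Lemma~\ref{L-K2} gives $\E[K_2\mid K_1=k]\ge\half m$, and hence $\E[K_1+K_2-m\mid K_1=k]\ge k-\half m\ge\half\sqrt m$. Summing over $k\ge\tau$ yields
\[\E[(K_1+K_2-m)\b{1}_{\c{E}}]\ge\half\sqrt m\cdot\Pr[K_1\ge \tau].\]
To lower bound the switching probability I would combine Lemmas~\ref{L-K1} and~\ref{lem:CLT} termwise: since $\Pr[K_1=k]\ge\half 2^{-m}{m\choose k}=\half\Pr[\b{B}_m=k]$, summing over $k\ge\tau$ gives $\Pr[K_1\ge\tau]\ge\half\Pr[\b{B}_m\ge\tau]\ge\half\cdot\f{7}{64}=\f{7}{128}$. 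Combining the last two displays yields $\E[(K_1+K_2-m)\b{1}_{\c{E}}]\ge\f{7}{256}\sqrt m$, and as $\f{7}{256}>\rec{40}$ this proves the first bound.

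The mis\`ere bound is entirely symmetric. For $\c{H}^-_*$ I would set $\tau'=\half m-\half\sqrt m$ and switch to ``2'' on $\{K_1\le\tau'\}$; the same identity gives $\E[S(\c{H}^-_*,\rand{\pi})]=m+\E[(K_1+K_2-m)\b{1}_{\{K_1\le\tau'\}}]$. On $\{K_1\le\tau'\}$ we have $k\le\tau'<\half m$, so the second half of Lemma~\ref{L-K2} gives $\E[K_2\mid K_1=k]\le\half m$ and hence $\E[K_1+K_2-m\mid K_1=k]\le k-\half m\le-\half\sqrt m$. Using Lemma~\ref{L-K1} together with the symmetry identity $\Pr[\b{B}_m\le\tau']=\Pr[\b{B}_m\ge\tau]\ge\f{7}{64}$ (which is exactly the inequality proved inside Lemma~\ref{lem:CLT}), I get $\Pr[K_1\le\tau']\ge\f{7}{128}$, and therefore $\E[S(\c{H}^-_*,\rand{\pi})]\le m-\f{7}{256}\sqrt m\le m-\rec{40}\sqrt m$.

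Because the three lemmas already do the analytic work, I do not expect a genuine obstacle in the assembly; the remaining effort is bookkeeping, and the points I would watch most carefully are the following. First, the directions of the inequalities must stay consistent: one switches to ``2'' when $K_1$ is large in the maximizing case but when $K_1$ is small in the mis\`ere case, precisely so that Lemma~\ref{L-K2} pushes the conditional expectation of $K_2$ the desired way. Second, when $\half m+\half\sqrt m$ is an integer (exactly when $m$ is a perfect square) the strict/non-strict boundary matters; I avoid this by defining the switching events as the closed sets $\{K_1\ge\tau\}$ and $\{K_1\le\tau'\}$, which is where the non-strict estimate of Lemma~\ref{lem:CLT} applies with no loss. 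Third, the final numerical check $\f{7}{256}>\rec{40}$ leaves a comfortable margin, so the constant $\rec{40}$ is not tight. Finally, Lemma~\ref{L-K1} requires $m\ge 2$, so I would dispose of $m=1$ separately using Theorem~\ref{thm:1}, which gives $\E[\b{S}_{1,n}]\approx 1.72\ge 1+\rec{40}$ under optimal play and $\approx0.632\le 1-\rec{40}$ under optimal mis\`ere play.
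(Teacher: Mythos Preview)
Your proposal is correct and follows essentially the same approach as the paper: you analyze $\c{H}^\pm_*$, condition on $K_1$, invoke Lemma~\ref{L-K2} for the conditional expectation of $K_2$, and combine Lemmas~\ref{L-K1} and~\ref{lem:CLT} to lower bound the switching probability, handling $m=1$ separately via Theorem~\ref{thm:1}. Your treatment is in fact slightly cleaner than the paper's, since you are explicit about taking the closed switching event $\{K_1\ge\tau\}$ (the paper's stated definition of $\c{H}^+_*$ and its proof disagree on strict versus non-strict at the threshold) and you write out the mis\`ere case rather than omitting it.
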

\begin{proof}
	If $m=1$, then this follows from Theorem~\ref{thm:1}, so from now on we assume $m\ge 2$.  We first prove our lower bound on the optimal strategy by considering the aforementioned strategy $\c{H}^+_*$  of guessing ``1'' a total of $\floor{mn/2}$ times, then guessing ``1'' the rest of the game if we guessed fewer than  $\half m+\sqrt{m}$ cards correctly, and otherwise guessing ``2'' for the rest of the game.  
	
	For ease of notation, we let $\b{K}_i=K_i(\rand{\pi})$ for $i=1,2$ as defined in Lemmas~\ref{L-K1} and \ref{L-K2}.  With this, we see that $S(\c{H}_*^+,\rand{\pi})=m$ if $\b{K}_1<\half m+\half\sqrt{m}$ and $S(\c{H}_*^+,\rand{\pi})=\b{K}_1+\b{K}_2$ otherwise.    Thus $\E[S(\c{H}_*^+,\rand{\pi})]$ is equal to
	{\small \begin{align*}Pr\Big[\b{K}_1&<\half m+\half \sqrt{m}\Big]\cdot m+\sum_{k\ge \half m+\half \sqrt{m}} \Pr[\b{K}_1=k]\cdot (k+\E[\b{K}_2|\b{K}_1=k])\\ &\ge \hspace{.5em}\Pr\Big[\b{K}_1<\half m+\half \sqrt{m}\Big]\cdot m+\sum_{k\ge \half m+\half \sqrt{m}}\Pr[\b{K}_1=k]\cdot\l(\half m+\half \sqrt{m}+\half m\r)\\ 
		&=  \hspace{.5em}m+\Pr\Big[\b{K}_1\ge \half m+\half\sqrt{m}\Big]\cdot \half \sqrt{m},\end{align*}}
	where the inequality used Lemma~\ref{L-K2}. By Lemmas~\ref{L-K1} and \ref{lem:CLT}, we have \[\Pr\l[\b{K}_1\ge \half m+\half \sqrt{m}\r]\ge \half \Pr\l[\b{B}_m\ge \half  m+\half \sqrt{m}\r]\ge \rec{20},\]
	and with this we conclude the desired lower bound for the optimal strategy.
	
	For the optimal mis\`ere strategy, essentially the same analysis as above applies to $\c{H}_*^-$, the only significant change being that we use the second half of Lemma~\ref{L-K2} instead of the first half.  We omit the details.
\end{proof}

\section{Proof of Theorem~\ref{THM:PLUS}: the $\gam$-shifting Strategy $\c{F}_\gam$.}\label{sec:plus}
Intuitively we use the $\gam$-shifting strategy $\c{F}_\gam$ to achieve the lower bound of Theorem~\ref{THM:PLUS} for $\E[\b{S}_{2,n}]$ and $\E[\b{S}_{3,n}]$, though for technical reasons it will be convenient to use a variant that never guesses card types larger than some cutoff value $k$.  To aid in our proof, we use the following lemma for approximating sums with integrals.

\begin{lem}[\cite{S}, Theorem 4.2]\label{lem:integral}
	Let $a<b$ be integers.  Let $h$ be an integrable function on $[a-1,b+1]$, $S=\sum_{i=a}^b h(i)$, and $I=\int_a^b h(x)\,dx$.  Let $M$ be such that $|h(x)|\le M$ for all $a-1\le x\le b+1$.  Suppose $[a-1,b+1]$ can be broken up into at most $r$ intervals such that $h$ is monotone on each.  Then \[|S-I|\le 6rM.\]
\end{lem}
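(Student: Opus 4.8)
The plan is to reduce the global comparison to a purely local one on each unit interval and then use monotonicity to make the local errors telescope. Starting from the identity $I=\sum_{i=a}^{b-1}\int_i^{i+1}h(x)\,dx$, I would write
\[ S-I=h(b)+\sum_{i=a}^{b-1}E_i,\qquad E_i:=h(i)-\int_i^{i+1}h(x)\,dx=\int_i^{i+1}\bigl(h(i)-h(x)\bigr)\,dx. \]
Since $|h(b)|\le M$, the whole problem comes down to bounding $\sum_{i=a}^{b-1}|E_i|$, and the trivial pointwise estimate $|h(i)-h(x)|\le 2M$ already gives $|E_i|\le 2M$ for every $i$. The point of the argument is to replace this crude bound by a telescoping one wherever $h$ is monotone.

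The key step is to split the unit intervals into \emph{good} ones, on which $h$ is monotone, and \emph{bad} ones. Let $c_1<\cdots<c_{r-1}$ be the breakpoints of the given monotone decomposition of $[a-1,b+1]$, and call $[i,i+1]$ bad if some $c_j$ lies strictly inside it and good otherwise. There are at most $r-1$ bad intervals, since one breakpoint can spoil only the interval containing it; for these I keep $|E_i|\le 2M$, contributing at most $2(r-1)M$ in total. For a good interval $h$ is monotone on $[i,i+1]$, so $h(x)$ lies between $h(i)$ and $h(i+1)$ and hence $|h(i)-h(x)|\le|h(i+1)-h(i)|$ throughout, giving $|E_i|\le|h(i+1)-h(i)|$.

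Finally I would sum the good contributions by grouping them according to the monotone piece containing them. Each good interval lies entirely inside one of the (at most) $r$ pieces, and within a single piece the relevant integers form a contiguous block on which $h(i+1)-h(i)$ has constant sign, so $\sum_i|h(i+1)-h(i)|$ telescopes to the absolute difference of $h$ between the endpoints of that block, which is at most $2M$. Summing over the pieces bounds the good part by $2rM$, and combining everything yields $|S-I|\le M+2(r-1)M+2rM\le 6rM$.

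The one genuinely delicate point is that the monotone decomposition need not respect the integer grid: a breakpoint can fall strictly inside a unit interval, where the clean telescoping comparison simply fails. This is exactly what the good/bad dichotomy is designed to absorb, and it is the reason one pays a factor proportional to $r$ rather than a single additive constant. The hypothesis that $h$ is defined and bounded on the slightly enlarged interval $[a-1,b+1]$ is what lets the boundary unit intervals be handled by the same crude estimate, so no separate endpoint analysis is needed. The constant $6$ is generous—this argument in fact delivers roughly $4rM$—so no further optimization is required.
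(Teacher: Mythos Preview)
Your argument is correct. The paper itself does not prove this lemma; it is quoted verbatim as Theorem~4.2 of Florescu--Spencer's \emph{Asymptopia} and used as a black box, so there is no ``paper's own proof'' to compare against. Your proof is essentially the standard one from that reference: decompose $S-I$ into unit-interval errors $E_i$, use the crude bound $|E_i|\le 2M$ on the at most $r-1$ intervals containing a breakpoint, and telescope the remaining errors within each monotone piece to get at most $2M$ per piece. The final tally $M+2(r-1)M+2rM=(4r-1)M\le 6rM$ is exactly the kind of slack the stated constant $6$ is meant to absorb.
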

We now prove Theorem~\ref{THM:PLUS}, which for the convenience of the reader we restate here.
\begin{thm}
	For $n$ sufficiently large, under the optimal strategy with Yes/No feedback,
	\[\E[\b{S}_{2,n}]\ge 2.91,\]
	\[\E[\b{S}_{3,n}]\ge 3.97.\]
\end{thm}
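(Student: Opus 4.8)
The plan is to prove both lower bounds by analyzing the $\gam$-shifting strategy $\c{F}_\gam$, since the optimal strategy can only do better; it therefore suffices to exhibit a single $\gam$ (together with a cutoff $k$) for which the limiting expectation of a cutoff variant of $\c{F}_\gam$ exceeds $2.91$ when $m=2$ and $3.97$ when $m=3$. To make the analysis finite, I would first replace $\c{F}_\gam$ by the variant $\c{F}_\gam^{(k)}$ that never guesses a card type exceeding $k$, guessing ``$k$'' for the remainder once a ``$k$'' has been matched. Since truncation can only forgo correct guesses of types larger than $k$, any lower bound for $\c{F}_\gam^{(k)}$ is automatically a lower bound for $\c{F}_\gam$, and hence for the optimal strategy.

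The key structural observation is a renewal description of the shifting process in the large-$n$ limit. Normalize positions to $[0,1]$ by dividing by $mn$; as $n\to\infty$ the $m$ copies of each fixed small card type behave like $m$ independent uniform points, independently across types. Writing $T_1$ for the normalized position at which the first ``1'' is matched and, recursively, $T_j$ for the position of the first ``$j$'' lying after $T_{j-1}$, a short computation shows that in the limit the increments $T_j-T_{j-1}$ are independent, each distributed as the minimum $W$ of $m$ independent uniforms on $[0,1]$, so that the event $\{\b{S}_{m,n}\ge j\}$ becomes the renewal event $\{W_1+\cdots+W_j\le 1\}$ with $W_1,W_2,\ldots$ independent copies of $W$. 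This already recovers $\E[\b{S}_{1,n}]\to e-1$ in the case $m=1$.

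For $\c{F}_\gam$ I would condition on $T_1$: on the event $\{T_1<\gam\}$ the player continues shifting and contributes $1$ plus the expected number of further renewals in $[0,1-T_1]$, while on $\{T_1\ge\gam\}$ the player reverts to the safe strategy $\c{S}$ and is guaranteed $m$ correct guesses of ``1'' together with a small, explicitly computable contribution from the ``2''s appearing after the last ``1.''  Assembling these pieces expresses the limiting value of $\E[S(\c{F}_\gam^{(k)},\rand{\pi})]$ as an explicit function of $\gam$, $k$, and $m$ through renewal probabilities of the form $\Pr[W_1+\cdots+W_j\le x]$. I would then make this rigorous at finite $n$ by writing the exact expectation as nested sums over the positions of the first matched copy of each guessed type, each summand being a ratio of binomial coefficients, and applying Lemma~\ref{lem:integral} to replace each sum by the corresponding integral; this shows the finite-$n$ expectation equals the continuous-model value up to error terms that vanish as $n\to\infty$.

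The last step is numerical: choose $k$ large enough that the truncation loss is within the available slack, choose $\gam$ near-optimal (the data suggest $\gam\approx 0.3$ for $m=2$ and $\gam\approx 0.25$ for $m=3$), and evaluate or rigorously lower-bound the resulting one- and two-dimensional integrals to certify the thresholds $2.91$ and $3.97$. The hard part will be precisely this quantitative endgame: the renewal probabilities $\Pr[W_1+\cdots+W_j\le 1]$ have no clean closed form, so one must simultaneously bound the truncation tail, control the Lemma~\ref{lem:integral} and finite-$n$ error terms, and lower-bound the iterated integrals tightly enough that, after optimizing over $\gam$, the certified value still clears the stated constants, leaving very little room for slack, especially for the sharper bound $\E[\b{S}_{3,n}]\ge 3.97$.
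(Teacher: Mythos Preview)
Your overall plan coincides with the paper's: analyze a truncated $(k,\gamma)$-variant of the shifting strategy, pass to a large-$n$ limit using Lemma~\ref{lem:integral}, and then optimize $\gamma$ numerically. Where you diverge is in how the limiting expectation is computed. The paper does not use any renewal description: it observes that $S(\c{F}_{k,\gamma},\pi)$ depends only on $\pi^{\le k}\in\S_{m,k}$ and on the position of the first ``1,'' computes the joint law of these two via an explicit count, and then \emph{exhaustively enumerates} $\S_{m,k}$ to build polynomials $f_{k,n},g_{k,n}$ whose integrals give the answer (with $k=6,\ \gamma=.35$ for $m=2$ and $k=5,\ \gamma=.25$ for $m=3$, the enumeration taking about a day on a laptop). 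Your renewal route is more conceptual and avoids brute-force enumeration; what it buys is a clean formula $\Pr[\text{score}\ge j]=\Pr[W_1+\cdots+W_j\le 1]$ for the shifting phase, at the cost of leaving multi-dimensional integrals (not just ``one- and two-dimensional'') to be certified.

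Two points to tighten. First, the claim that the increments $T_j-T_{j-1}$ are literally i.i.d.\ with law $W$ is not correct: given $T_{j-1}=t$, the law of $T_j-T_{j-1}$ depends on $t$ (some type-$j$ points may lie before $t$). What is true, via the shift $u\mapsto (u-t)\bmod 1$, is that the \emph{wrapped} gaps are i.i.d.\ copies of $W$, and this is exactly enough to yield your identity $\{\text{score}\ge j\}=\{W_1+\cdots+W_j\le 1\}$; you should state it that way. Second, the paper's truncated strategy also wraps once (after matching ``$k$'' it returns to ``1'' and cycles again), and on the safe branch it tracks contributions from all types $1,\ldots,k$ via the functions $G_k(\sigma)$, not just the ``2''s after the last ``1.'' Your non-wrapping variant and your cruder safe-phase bound are still valid lower bounds for the optimal strategy, but they are weaker than what the paper analyzes; since the target constants $2.91$ and $3.97$ are quite tight, you should check that your simplified version still clears them before committing to it. (Also, the sentence ``truncation can only forgo correct guesses of types larger than $k$'' is not a valid monotonicity argument---truncation trades future shifts for extra ``$k$'' matches---but you do not actually need that comparison, since any explicit strategy lower-bounds the optimum.)
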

\begin{proof}
	The argument is somewhat complex, so to start we consider $m=2$. Fix some integer $k$ and real $\gam$, and define the $(k,\gam)$-shifting strategy $\c{F}_{k,\gam}$ as follows. The player guesses ``1'' until they get a correct guess.  If this happens in fewer than $2\gam n$ guesses, they use a modified shifting strategy $\c{F}_k$ where they guess ``2'' until they get a correct guess, then ``3'' until they get a correct guess, all the way up to ``$k$,'' after which they go back to guessing ``1'' until they get a correct guess, then ``2,'' and so on.  If they guess ``$k$'' correctly a second time, they keep guessing ``$k$'' (effectively giving up on trying to get more points).  If the first ``1'' appears after $2\gam n$ guesses, they use a modified safe strategy $\c{S}_k$ where they guess ``1'' until they guess both correctly, then ``2'' until they guess both correctly, and so on until they guess both ``$k$''s correctly, after which they keep guessing ``$k$'' for the rest of the game.
	
	Given a permutation $\pi$, let $F_k(\pi)$ be the score the player gets using the modified shifting strategy $\c{F}_k$ on $\pi$, and let $G_k(\pi)$ be the score they get using the modified safe strategy $\c{S}_k$ on $\pi$.  Let $\pi_1^{-1}$ denote the index of the leftmost ``1'' in $\pi$.  For example, if $k=3$ and $\pi=41234132$, then $\pi_1^{-1}=2,\ F_3(\pi)=5$ (coming from guessing 1, 2, 3, 1, 2 correctly) and $G_3(\pi)=3$ (coming from guessing 1, 1, 2 correctly).  As another set of examples,	$\{(\pi,\pi_1^{-1},F_2(\pi),G_2(\pi)):\pi \in \S_{2,2}\}$ is equal to
	{\small \begin{align}
			\{(1122,1,2,4),\ (1212,1,4,3),\ (1221,1,3,2),\ (2112,2,2,3),\ (2121,2,3,2),\ (2211,3,1,2)\}.\label{E-Ex}
	\end{align}}
	
	Let $\pi^{\le k}$ denote $\pi$ after deleting every letter larger than $k$.  By construction,  $S(\c{F}_{k,\gam},\pi)=F_k(\pi)=F_k(\pi^{\le k})$ if $\pi_1^{-1}<2\gam n$ and $S(\c{F}_{k,\gam},\pi)=G_k(\pi^{\le k})$ otherwise.  That is, $S(\c{F}_{k,\gam},\pi)$ depends entirely on $\pi^{\le k}$ and $\pi_1^{-1}$.  In particular,
	{\begin{align}\E[S(\c{F}_{k,\gam},\rand{\pi})]= &\sum_{t<2\gam n}\ \sum_{\sig\in \S_{2,k}}F_k(\sig)\Pr[\rand{\pi}_1^{-1}=t\ \cap\ \rand{\pi}^{\le k}=\sig]\nonumber \\&+\sum_{t\ge 2\gam n}\ \sum_{\sig\in \S_{2,k}}G_k(\sig)\Pr[\rand{\pi}_1^{-1}=t\ \cap\ \rand{\pi}^{\le k}=\sig].\label{E-Gam}\end{align}}
	
	With this in mind, we wish to determine $\Pr[\rand{\pi}_1^{-1}=t\ \cap\ \rand{\pi}^{\le k}=\sig]$.  Fix some $\sig\in \S_{2,k}$ and define $q=\sig_1^{-1}-1$.  We claim that the total number of $\pi\in \S_{2,n}$ with $\pi_1^{-1}=t$ and $\pi^{\le k}=\sig$ is \begin{equation}\label{E-qCount}{t-1\choose q}{2n-t\choose 2k-1-q}\f{(2n-2k)!}{ 2^{n-k}}.\end{equation}   Indeed, first we choose the indices where the letters of type $\{1,\ldots,k\}$ will go as follows.  The leftmost ``1'' must go in position $t$, and then one chooses the $q$ positions to the left of $t$ and the $2k-1-q$ positions to the right of $t$ where the remainder of these symbols go in ${t-1\choose q}{2n-t\choose 2k-1-q}$ ways.  Once these indices are chosen, the relative order of the symbols in $\{2,\ldots,k\}$ is determined by $\sig$.  One then arranges the remaining symbols that are not in $\{1,\ldots,k\}$ in $(2n-2k)!/2^{n-k}$ total ways.  This proves the claim.  We note that \eqref{E-qCount} can be written as
	\begin{align*}\f{(t-1)(t-2)\cdots(t-q)\cdot (2n-t)\cdots(2n-t-2k+q+2)\cdot (2n-2k)!}{2^{n-k}q!(2k-1-q)!}&\\\ge \f{(t-2k)^q(2n-t-2k)^{2k-1-q}(2n-2k)!}{2^{n-k}q!(2k-1-q)!}&,\end{align*}
	where we used that $0\le q<2k$.  By dividing this by $|\S_{2,n}|=(2n)!/2^n$, we conclude that
	\begin{align}
		\Pr[\rand{\pi}_1^{-1}=t\ \cap\ \rand{\pi}^{\le k}=\sig]&\ge\f{2^{k}}{q!(2k-1-q)!}\f{(t-2k)^q(2n-t-2k)^{2k-1-q}}{(2n)\cdots(2n-2k+1)}\nonumber\\  &\ge\f{2^{k}}{q!(2k-1-q)!}\f{(t-2k)^q(2n-t-2k)^{2k-1-q}}{(2n)^{2k}}.\label{E-Prob}
	\end{align}
	Let \[\phi_{q,n}(t):=\f{2^k}{2n\cdot q!(2k-1-q)!}\l(\f{t}{2n}\r)^q\l(1-\f{t}{2n}\r)^{2k-1-q},\]  
	which is simply \eqref{E-Prob} after replacing each $t-2k$ term with $t$, and intuitively this should be close to \eqref{E-Prob} because $k$ is a fixed integer.  More precisely, we claim that \eqref{E-Prob} is equal to $\phi_{q,n}(t)+O(n^{-2})$ for $1\le t\le 2n$, which implies \begin{equation}
		\Pr[\rand{\pi}_1^{-1}=t\ \cap\ \rand{\pi}^{\le k}=\sig]\ge \phi_{q,n}(t)+O(n^{-2}).\label{E-Phi}
	\end{equation}  Indeed, consider the numerator of the second fraction in \eqref{E-Prob} as a polynomial in $2k$.  Because $t\le 2n$, for any integer $\al$ the coefficient of $(2k)^\al$ in this polynomial will be bounded in absolute value by 
	$2^{2k}(2n)^{2k-1-\al}$, so summing over all the terms involving $2k$ in the numerator gives, in absolute value, at most
	\[2^{2k}\sum_{\al=1}^{2k}(2k)^\al(2n)^{2k-1-\al}\le 2^{2k}(2k)^{2k+1} n^{2k-2}=O(n^{2k-2}),\]
	where we used that $k$ is a fixed constant. Dividing this by the denominator $(2n)^{2k}$ gives the result.
	
	Define
	\[f_{k,n}(t)=\sum_{q=0}^{2k-1} \sum_{\substack{\sig\in \S_{2,k},\\ \sig_1^{-1}=q+1}} F_k(\sig) \phi_{q,n}(t),\]
	and similarly define $g_{k,n}(t)$ but with $G_k(\sig)$ used instead of $F_k(\sig)$.  Then \eqref{E-Gam} and \eqref{E-Phi} imply 
	\[\E[S(\c{F}_{k,\gam},\rand{\pi})]\ge \sum_{t<2\gam n} f_{k,n}(t)+\sum_{t\ge 2\gam n} g_{k,n}(t)+O(n^{-1}).\]

	We wish to replace these sums with integrals using Lemma~\ref{lem:integral}.  Observe that $|f_{k,n}(t)|\le \f{k2^k}{n} |\S_{2,k}|$ for $0\le t\le 2n$ since $|\phi_{q,n}(t)|\le \f{2^{k-1}}{n}$ in this range and $F_k(\sig)\le 2k$ for all $\sig\in \S_{2,k}$.  Further, $f_{k,n}(t)$ has degree at most $2k$, so we can break $\R$ into at most $2k$ intervals  on which $f$ is monotone.  Similar analysis holds for $g_{k,n}(t)$.  By taking $r=2k=O(1)$ and $M=\f{2k}{n}|\S_{2,k}|=O(n^{-1})$, we conclude that 
	\begin{align*}\E[S(\c{F}_{k,\gam},\rand{\pi})]&\ge \int_1^{\floor{2\gam n}-1} f_{k,n}(t)\,dt+\int_{\ceil{2\gam n}}^{2n-1} g_{k,n}(t)\,dt+O(n^{-1})\\ &=\int_0^{2\gam n} f_{k,n}(t)\,dt+\int_{2\gam n}^{2n} g_{k,n}(t)\,dt+O(n^{-1}),\end{align*}
	where we used that $|g_{k,n}(t)|,|f_{k,n}(t)|= O(n^{-1})$ in this range to tweak the limits of integration.
	
	At this point one simply needs to choose some $k$ for which the polynomials $f_{k,n}$ and $g_{k,n}$ are feasible to compute and then to choose $\gam$ so that the corresponding integral is optimized.  For example, taking $k=2$ and using \eqref{E-Ex} gives that $f_{2,n}(t)$ equals
	{\small \[(2+4+3)\f{1}{3n}(1-t/2n)^{3}+(2+3)\f{1}{n}(t/2n)(1-t/2n)^2+1\cdot \f{1}{n}(t/2n)^2(1-t/2n),\]}
	and similarly $g_{2,n}(t)$ equals {\small\[(4+3+2)\f{1}{3n}(1-t/2n)^{3}+(3+2)\f{1}{n}(t/2n)(1-t/2n)^2+2\cdot \f{1}{n}(t/2n)^2(1-t/2n).\]}

	In this case $g_{2,n}(t)\ge f_{2,n}(t)$ for all $0\le t\le2n$, so the optimal choice is $\gam=0$, and evaluating $\int_0^{2n} g_{2,n}(t)\,dt$ gives the bound \[ \E[S(\c{F}_{2,0},\rand{\pi})]\ge 8/3+O(n^{-1}).\]
	
	In a similar fashion, it is feasible to compute the polynomials $f_{6,n}(t)$ and $g_{6,n}(t)$ by use of a computer.  Namely, one does this by computing $F_6(\pi)$ and $G_6(\pi)$ for each $\pi\in \S_{2,6}$, which took about a day using a laptop.  After recording the functions $f_{6,n}(t),g_{6,n}(t)$, one can choose various values of $\gam$ to plug into $\int_0^{2\gam n}f_{6,n}(t)\,dt+\int_{2\gam n}^{2n}g_{6,n}(t)\,dt$ in order to find a value of $\gam$ which gives a relatively large value.  Experimentally we found that $\gam=.35$ makes this integral fairly large, namely at least 2.9144.  In total this gives the bound \[ \E[S(\c{F}_{6,.35},\rand{\pi})]\ge 2.9143+O(n^{-1}),\]
	so for $n$ sufficiently large we obtain our desired bound.
	
	For larger $m$ a similar proof works, the only significant change being that we use
	\[\phi_{q,n}(x)=\f{(m!)^{k}}{(mn)q!(mk-1-q)!}(t/mn)^q(1-t/mn)^{mk-1-q}.\]
	For $m=3$, if we take $k=5$ and $\gam=.25$ (which again is found through computer computation and experimentation), this method ends up giving the desired lower bound for $\E[\b{S}_{3,n}]$ when $n$ is sufficiently large.
\end{proof}
In principle, many other strategies that utilize finite cutoffs could be analyzed with this method, which could further improve our asymptotic bounds.  However, it seems impractical to extend this method to larger $m$.


\section{Proof of Theorem~\ref{thm:minus}: the Avoiding Strategy $\c{A}$}\label{sec:minus}
This section does not appear in the journal version of this paper and has not been peer reviewed.

We recall that the avoiding strategy $\c{A}$ was defined in Section~\ref{sec:prac} as the strategy of guessing `1', then `2', and so on until some `$k$' is guessed correctly, and then `$k$' is guessed the rest of the game.  To determine $\E[S(\c{A},\rand{\pi})]$, we recall the Bonferonni inequalities, which are essentially weaker versions of the principle of inclusion-exclusion.

\begin{lem}[\cite{S}]
	Let $A_1,\ldots,A_{k-1}$ be events in a finite probability space.  For all even $L$ we have
	\[\Pr\l[\bigcap \ol{A_t}\r]\le \sum_{\ell=0}^L (-1)^\ell\sum_{1\le t_1<\cdots<t_\ell< k}\Pr\l[\bigcap_{j=1}^\ell A_{t_j}\r],\]
	and for all odd $L$ we have
	\[\Pr\l[\bigcap \ol{A_t}\r]\ge \sum_{\ell=0}^L (-1)^\ell\sum_{1\le t_1<\cdots<t_\ell< k}\Pr\l[\bigcap_{j=1}^\ell A_{t_j}\r].\]
\end{lem}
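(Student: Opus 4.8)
The plan is to prove both inequalities at once by reducing them to a single pointwise combinatorial inequality and then taking expectations. First I would introduce, for each point $\om$ of the finite probability space, the count $N=N(\om)$ of indices $t$ with $\om\in A_t$. Then $\om\in\bigcap\ol{A_t}$ holds precisely when $N(\om)=0$, so $\Pr[\bigcap\ol{A_t}]=\E[\mathbf{1}[N=0]]$. On the other side, for each fixed $\ell$ the inner sum $\sum_{1\le t_1<\cdots<t_\ell<k}\Pr[\bigcap_{j=1}^\ell A_{t_j}]$ weights each $\om$ by the number of $\ell$-element subsets of the events all containing $\om$, which is exactly ${N(\om)\choose \ell}$; hence this sum equals $\E[{N\choose \ell}]$ (the case $\ell=0$ giving $\E[1]=1$). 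By linearity of expectation, the displayed right-hand sides are therefore $\E[\sum_{\ell=0}^L(-1)^\ell{N\choose \ell}]$, and it suffices to prove the pointwise bounds: for every integer $N\ge 0$ one has $\mathbf{1}[N=0]\le \sum_{\ell=0}^L(-1)^\ell{N\choose \ell}$ when $L$ is even, with the inequality reversed when $L$ is odd.

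The engine of the pointwise bound is the truncated alternating-sum identity $\sum_{\ell=0}^L(-1)^\ell{N\choose \ell}=(-1)^L{N-1\choose L}$, valid for all $N\ge 1$. I would prove this by applying Pascal's rule ${N\choose \ell}={N-1\choose \ell}+{N-1\choose \ell-1}$, so that the summand becomes $(-1)^\ell{N-1\choose \ell}-(-1)^{\ell-1}{N-1\choose \ell-1}$ and the sum telescopes to $(-1)^L{N-1\choose L}$ (using ${N-1\choose -1}=0$). With the identity in hand the sign analysis is immediate: for $N\ge 1$ the coefficient ${N-1\choose L}$ is nonnegative, so the partial sum equals $(-1)^L{N-1\choose L}$, which is $\ge 0$ for even $L$ and $\le 0$ for odd $L$; since $\mathbf{1}[N=0]=0$ in this range, both desired inequalities hold. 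For $N=0$ every term with $\ell\ge 1$ vanishes and the partial sum is $1=\mathbf{1}[N=0]$, so equality holds. This establishes the pointwise inequalities for all $N\ge 0$.

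Taking expectations over $\om$ then converts the pointwise bounds back into the two displayed statements, completing the argument. The hard part is really just the truncated-alternating-sum identity together with careful handling of the $N=0$ boundary case; once these are in place, everything reduces to the routine dictionary between probabilities of intersections of the $A_t$ and expectations of binomial-coefficient weights of $N$.
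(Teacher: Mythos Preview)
Your argument is correct: the reduction to the pointwise inequality via $N(\om)$ and the telescoping identity $\sum_{\ell=0}^L(-1)^\ell\binom{N}{\ell}=(-1)^L\binom{N-1}{L}$ for $N\ge 1$ are both valid, and the boundary case $N=0$ is handled properly. Note that the paper does not actually give its own proof of this lemma; it is stated with a citation to Florescu and Spencer's \textit{Asymptopia} and then used as a black box, so there is no in-paper argument to compare against. Your proof is the standard one and would serve perfectly well as a self-contained justification.
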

We use this to prove our main technical lemma of this section.
\begin{lem}\label{lem:approx}
	For $\pi\in \S_{m,n}$, let $f(\pi)$ denote the first index which is guessed correctly using the avoiding strategy $\c{A}$, with $f(\pi):=\infty$ if no such index exists.  For $m$ fixed and any $\ep>0$, we have for all $k\le mn$,
	\[\l|\Pr[f(\rand{\pi})=k]-e^{-k/n}n^{-1}\r|\le \ep n^{-1}+O_{\ep,m}(n^{-2}),\]
	and moreover
	{\small \[\l|\Pr[f(\rand{\pi})=k]\cdot \E[S(\c{A},\rand{\pi})|f(\rand{\pi})=k]-\l(m-\f{(m-1)k}{mn}\r)e^{-k/n}n^{-1}\r|\le \ep n^{-1}+O_{\ep,m}(n^{-2}).\]}
\end{lem}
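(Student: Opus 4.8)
The plan is to compute $\Pr[f(\rand{\pi})=k]$ by interpreting the event through the avoiding strategy's mechanics, and then to bound it via the Bonferroni inequalities. Recall that under $\c{A}$, on each of the first $n$ rounds the player guesses ``$1$,'' then ``$2$,'' and so on; the event $f(\rand{\pi})=k$ means the first $k-1$ guesses are \emph{incorrect} and the $k$th guess is correct. Writing $k=qn+r$ with $0\le r<n$ (so that the $k$th guess is of card type $r+1$, during the $(q+1)$th pass through the labels), I would let $A_t$ for $1\le t<k$ denote the event that the $t$th guess is correct, so that $\{f(\rand{\pi})\ge k\}=\bigcap_{t<k}\ol{A_t}$. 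The first step is thus to express $\Pr[f(\rand{\pi})=k]=\Pr[\bigcap_{t<k}\ol{A_t}]-\Pr[\bigcap_{t\le k}\ol{A_t}]$ and to estimate each term.

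Next I would compute the joint probabilities $\Pr[\bigcap_{j=1}^\ell A_{t_j}]$ appearing in the Bonferroni bounds. Each such intersection says that $\ell$ prescribed guesses (each of a specific card type at a specific position) are simultaneously correct; since the deck is a uniformly random word in $\S_{m,n}$, this probability is a ratio of falling-factorial counts, and for fixed $\ell$ and $m$ it equals $(k/mn)^{\ell}/\ell!$ plus lower-order corrections — more precisely, summing over all $\binom{k-1}{\ell}$ choices of indices yields $\tfrac{1}{\ell!}(k/n)^{\ell}$ to leading order, with an $O_{\ell,m}(n^{-1})$ relative error coming from the fact that the positions lie in $m$ distinct passes and from the replacement of falling factorials by powers (exactly the kind of polynomial-coefficient bookkeeping carried out in the proof of Theorem~\ref{THM:PLUS}). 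Applying the Bonferroni inequalities with $L$ even and $L$ odd and letting the truncation level $L=L(\ep)$ be large enough that the tail $\sum_{\ell>L}\tfrac{1}{\ell!}(k/n)^{\ell}\le \tfrac{1}{2}\ep$ (using $k/n\le m$ so the series is dominated by a convergent exponential series in a bounded argument), I would obtain $\Pr[\bigcap_{t<k}\ol{A_t}]=e^{-k/n}+O(\ep)+O_{\ep,m}(n^{-1})$, and the analogous estimate with $k$ in place of $k-1$. Subtracting and noting $e^{-k/n}-e^{-(k+1)/n}=e^{-k/n}(1-e^{-1/n})=e^{-k/n}n^{-1}+O(n^{-2})$ gives the first claimed bound.

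For the second bound I would condition on $f(\rand{\pi})=k$ and analyze $\E[S(\c{A},\rand{\pi})\mid f(\rand{\pi})=k]$. Once the first correct guess occurs at index $k$, say of card type $j=r+1$, the strategy guesses ``$j$'' for every remaining round, so the conditional score equals $1$ plus the number of further copies of type $j$ among the $mn-k$ remaining cards. Given the conditioning, the $m-1$ other copies of type $j$ are (to leading order) uniformly placed among the positions not yet forced, so the expected number appearing after position $k$ is approximately $(m-1)\cdot\frac{mn-k}{mn}=(m-1)(1-k/mn)$; adding the guaranteed $1$ gives a conditional expectation of $m-(m-1)k/mn$ up to the lower-order corrections. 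Multiplying this by the first estimate for $\Pr[f(\rand{\pi})=k]$ and absorbing cross terms into the error yields the second inequality. The main obstacle I anticipate is the uniform-in-$k$ control of the error: the relative errors in the falling-factorial estimates degrade as $\ell$ grows, so I must choose $L$ depending only on $\ep$ and $m$ (not on $k$ or $n$) while simultaneously ensuring the Bonferroni tail is $\le\ep$; keeping these two requirements compatible — and verifying that the $O_{\ep,m}(n^{-2})$ constants do not secretly depend on $k$ — is the delicate bookkeeping that the bound $k/n\le m$ is designed to make possible.
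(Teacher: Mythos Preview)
Your decomposition $\Pr[f(\rand{\pi})=k]=\Pr[\bigcap_{t<k}\ol{A_t}]-\Pr[\bigcap_{t\le k}\ol{A_t}]$ followed by a separate Bonferroni truncation of each term does not give the claimed error. After truncating at level $L=L(\ep)$ you obtain each survival probability only up to an additive $O(\ep)+O_{\ep,m}(n^{-1})$; subtracting two such estimates leaves an error that is still $O(\ep)+O_{\ep,m}(n^{-1})$, whereas the lemma requires $\ep n^{-1}+O_{\ep,m}(n^{-2})$. The extra factor of $n^{-1}$ is not optional: in the subsequent application one sums over all $k\le mn$, so an $O(\ep)$ error per term would accumulate to $O(mn\ep)$, which is useless. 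The two Bonferroni remainders do not cancel to higher order, because Bonferroni only gives one-sided bounds and the truncation error is not known explicitly.

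The fix, and what the paper does, is to avoid the subtraction entirely: write $\Pr[f(\rand{\pi})=k]=\Pr\bigl[\bigcap_{t<k}\ol{A_t}\cap A_k\bigr]$ and apply Bonferroni to the events $\{A_t:t<k\}$ \emph{jointly with} $A_k$. Each term $\Pr[A_{t_1}\cap\cdots\cap A_{t_\ell}\cap A_k]$ is then of order $n^{-\ell-1}$ rather than $n^{-\ell}$, so the whole alternating sum and its truncation error carry the needed extra $n^{-1}$. The same device handles the second inequality: rather than estimating the conditional expectation heuristically (your ``uniformly placed to leading order'' step is not justified once one has conditioned on the complicated event $\{f(\rand{\pi})=k\}$), one introduces the event $A_*=\{\rand{\pi}_{k+1}=k^{(1)}\}$, writes the quantity of interest as $\Pr[\bigcap_{t<k}\ol{A_t}\cap A_k]+(mn-k)\Pr[\bigcap_{t<k}\ol{A_t}\cap A_k\cap A_*]$, and applies the same conditioned Bonferroni to each piece.
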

\begin{proof}
	For simplicity we first prove these results for $k\le n$.  For $t\le n$ let $A_t$ be the event $\rand{\pi}_t=t$.  Also define $A_{*}$ to be the event $\rand{\pi}_{k+1}=k$.  Observe that $f(\rand{\pi})=k$ if and only if $A_k$ occurs and no other $A_t$ with $t<k$ occurs.  Given $f(\rand{\pi})=k$, the expected score is 1 plus the expected number of $k$'s after index $k$, which by linearity of expectation is 1 plus $mn-k$ times the probability of $\rand{\pi}_{k+1}=k$.  In total then we conclude 
	\begin{equation}\Pr[f(\rand{\pi})=k]=\Pr\l[\bigcap_{t<k}\ol{A_t}\cap A_k\r],\label{eq:avoidProb}\end{equation}
	and similarly
	{\small\begin{align}
		\Pr[f(\rand{\pi})=k]\cdot \E[S(\c{A},\rand{\pi})]&=\Pr\l[\bigcap_{t<k}\ol{A_t}\cap A_k\r]\cdot \l(1+(mn-k)\Pr\l[A_{*}|\bigcap_{t<k}A_t\cap A_k\r]\r)\nonumber \\&=\Pr\l[\bigcap_{t<k}\ol{A_t}\cap A_k\r]+(mn-k)\Pr\l[\bigcap_{t<k}\ol{A_t}\cap A_k\cap A_{*}\r]\label{eq:avoidEx}.\end{align}}
	
	We wish to bound these quantities using the Bonferonni inequalities. We claim for any $1\le t_1<\cdots<t_\ell\le k\le n$ that
	\[\Pr[A_{t_1}\cap \cdots \cap A_{t_\ell}]=\f{(mn-\ell)!m^\ell}{(mn)!}.\]
	Indeed, every $\pi\in \S_{m,n}$ with the property $\pi_{t_j}=t_j$ for $1\le j\le \ell$ can be formed by setting $\pi_{t_j}=t_j$ (with the $t_j$ values all distinct since $k\le n$) and then the remaining positions can be filled in $(mn-\ell)!(m!)^{-n+\ell}((m-1)!)^{-\ell}$ ways.  The total number of $\pi\in \S_{m,n}$ is $(mn)!(m!)^{-n}$, so dividing these two quantities gives the claim.  Using $(mn)^\ell\ge \f{(mn)!}{(mn-\ell)!}\ge (mn-\ell+1)^\ell$, we in particular find
	\begin{equation}\label{eq:minus}n^{-\ell}\le \Pr[A_{t_1}\cap \cdots \cap A_{t_\ell}\cap A_k]\le n^{-\ell-1}+O_\ell(n^{-\ell-2}).\end{equation}
	Similarly we find
	\begin{align}
		\Pr\l[\bigcap A_{t_j}\cap A_k\cap A_*\r]&=\f{(mn-\ell-2)!(m!)^{-n+\ell+1}((m-1)!)^{-\ell} (m-2)!^{-1}}{(mn)!(m!)^{-n}}\nonumber\\&=\f{m-1}{m} n^{-\ell-2}+\Theta_\ell(n^{-\ell-3}).\label{eq:minus2}
	\end{align}
	
	Using the Bonferonni inequality and \eqref{eq:minus} gives for any even $L\le k$ that 
	\begin{align}\Pr\l[\bigcap_{t<k}\ol{A_t}\cap A_k\r]&\le \sum_{\ell=0}^{L}(-1)^\ell\sum_{1\le t_1<\cdots<t_\ell<k}\Pr[\bigcap A_{t_j}\cap A_k]\nonumber\\&=n^{-1}\sum_{\ell=0}^L \f{(-k/n)^\ell}{\ell!}+O_\ell(k^\ell n^{-\ell-1})\nonumber\\&=n^{-1} e^{-k/n}+O(L^{-L}n^{-1})+O_L(n^{-2}),\label{eq:sum1}\end{align}
	where this last step used $|e^x-\sum_{\ell=0}^L\f{x^\ell}{\ell!}|\le \f{e^x}{(L+1)!}=O(L^{-L})$ for $x\le 1$.
	Similarly one can use \eqref{eq:minus2} to show
	{\small\begin{equation}\label{eq:sum2}(mn-k)\Pr\l[\bigcap_{t<k}\ol{A_t}\cap A_k\cap A_{*}\r]=\l(1-\f{k}{mn}\r)(m-1)n^{-1}e^{-k/n}+O(L^{-L}n^{-1})+O_L(n^{-2}),\end{equation}}
	and using odd $L$ gives the reveres inequalities.  We conclude the first result for $k\le n$ by using \eqref{eq:avoidProb} and \eqref{eq:sum1} after taking $L$ sufficiently large so that $O(L^{-L}n^{-1})\le \ep n^{-1}$.  Similarly we conclude the second result for $k\le n$ by using \eqref{eq:avoidEx}, \eqref{eq:sum1}, \eqref{eq:sum2}, and taking $L$ to be sufficiently large.
	
	We now consider the general case.  Given $t\le mn$, let $1\le t^{(1)}\le n$ and $0\le t^{(2)}<m$ be the unique integers such that $t=t^{(1)}+t^{(2)}n$. Let $A_t$ be the event $\rand{\pi}_{t}=t^{(1)}$ and $A_*$ the event $\rand{\pi}_{k+1}=k^{(1)}$.  With this notation established, the exact same argument as above will work,  with the only issue being that the bounds of  \eqref{eq:minus} and \eqref{eq:minus2} will be incorrect whenever we have two entries in our sequence with the same $t^{(1)}$ value.  However, the number of such sequences is $O(k^{\ell-1})$, and one can show that the probabilities are still $\Theta_\ell(n^{-\ell-1})$ for any sequence.  Thus these terms get absorbed in the error term and the bound continues to hold.
\end{proof}

With this we can determine the asymptotic value of $\E[S(\c{A},\rand{\pi})]$.

\begin{proof}[Proof of Proposition~\ref{prop:A}]
	Note that \[\E[S(\c{A},\rand{\pi})]=\sum \Pr[f(\rand{\pi})=k]\E[S(\c{A},\rand{\pi})|f(\rand{\pi})=k],\] where this conditional expectation is 0 for $k=\infty$.  Using this, Lemma~\ref{lem:approx}, and Lemma~\ref{lem:integral}, we find for all $\ep>0$
	\begin{align*}\E[S(\c{A},\rand{\pi})]&\le n^{-1}\sum_{k=1}^{mn}\l(m-\f{(m-1)k}{mn}\r)e^{-k/n}+\ep+O_{\ep,m}(n^{-1})\\&=\int_0^m \l(m-\f{(m-1)x}{m}\r)e^{-x}dx+m\ep+O_{\ep,m}(n^{-1})\\&=m-1+m^{-1}-m^{-1}e^{-m}+m\ep+O_{\ep,m}(n^{-1}).\end{align*}
	A similar lower bound holds with $\ep$ replaced by $-\ep$, and by taking $\ep$ arbitrarily small we conclude the desired result.
\end{proof}

The last fact we need to prove Theorem~\ref{thm:minus} is the following.
\begin{lem}\label{lem:X}
	Given a strategy $\c{G}$ with Yes/No feedback, let $X(\c{G})$ be the event that $S(\c{G},\rand{\pi})=0$.  Then $\Pr[X(\c{A})]\ge \Pr[X(\c{G})]$ for all strategies $\c{G}$.
\end{lem}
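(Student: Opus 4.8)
The plan is to reduce the optimization over all strategies to a purely combinatorial optimization over \emph{forbidden-value sequences}, and then to show that the sequence used by $\c{A}$ is optimal by a symmetrization/exchange argument. The conceptual heart is the first reduction, which is clean; the combinatorial optimization is where the real work lies.

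\textbf{Reduction to sequences.} The key observation is that on the event $X(\c{G})$ every guess is wrong, so every piece of feedback received is ``incorrect.'' Hence a deterministic strategy $\c{G}$, run against the all-``No'' transcript, produces a fixed sequence of guesses $g_1,\dots,g_{mn}\in[n]$, and $X(\c{G})$ occurs exactly when $\rand{\pi}_t\neq g_t$ for all $t$; a randomized strategy is handled by conditioning on its internal randomness, so it suffices to treat deterministic strategies. Conversely every $g\in[n]^{mn}$ is the all-``No'' transcript of some strategy, and $\c{A}$ produces the cyclic sequence $g_t=((t-1)\bmod n)+1$. Thus $\Pr[X(\c{A})]=\Pr[\rand{\pi}_t\neq g_t\ \forall t]$ for the cyclic $g$, and the lemma reduces to showing that the cyclic sequence maximizes $P(g):=\Pr[\bigcap_t \ol{\{\rand{\pi}_t=g_t\}}]$ over all sequences $g$.

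\textbf{Reduction to profiles.} Since $\rand{\pi}$ is exchangeable in its positions and symmetric in its $n$ card types, $P(g)$ depends only on the multiset of counts $c_i:=|\{t:g_t=i\}|$, and the cyclic sequence corresponds to the balanced profile $c_1=\cdots=c_n=m$. So I must show that, among all profiles with $\sum_i c_i=mn$, avoidance probability is maximized at the balanced one. I would prove this by a smoothing argument establishing that $P$ is Schur-concave: whenever $c_i\ge c_j+2$, reassigning one position's forbidden label from $i$ to $j$ (so $(c_i,c_j)\mapsto(c_i-1,c_j+1)$) does not decrease $P$, and iterating drives any profile to the balanced one. Conditioning on the card at the single reassigned position $t^*$ turns the elementary step into a comparison of avoidance counts on a deck of $mn-1$ cards: writing $N(M;\vec c)$ for the number of arrangements of a multiset $M$ avoiding a forbidden-profile $\vec c$, a short computation gives that the step is favorable precisely when $N(M-e_i;\vec c')\ge N(M-e_j;\vec c')$, where $M$ is the full deck, $\vec c'$ is the induced profile on the other $mn-1$ positions, and $c_i'\ge c_j'$.

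\textbf{The main obstacle.} The crux is therefore the monotonicity statement: in a deck where types $i$ and $j$ have \emph{equal} multiplicity, if $c_i'\ge c_j'$ then removing one copy of the more heavily forbidden type leaves at least as many avoidant arrangements as removing a copy of the less forbidden type. The equal-multiplicity hypothesis is essential (small examples with lopsided multiplicities violate the bare inequality), so the argument must carry it through. I expect the cleanest route is an induction on the deck size: using the value-swap symmetry between the two equal-multiplicity types, one re-expresses the inequality for a fixed deck as a rearrangement inequality (forbidding more-abundant types \emph{less} yields more avoidant arrangements), and the ``reduce a card'' and ``move a forbidden label'' operations feed into one another as the deck shrinks, closing the induction; an augmenting-path injection between the two families of avoidant arrangements is a plausible alternative. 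Once this monotonicity is in hand, the exchange argument immediately yields $\Pr[X(\c{A})]\ge\Pr[X(\c{G})]$ for every $\c{G}$.
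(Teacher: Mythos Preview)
Your reduction is exactly the paper's: on the all-``No'' transcript a strategy collapses to a fixed forbidden-value sequence, the avoidance probability depends only on the count profile $(c_1,\dots,c_n)$, and $\c{A}$ realizes the balanced profile $c_i\equiv m$. The paper stops here and invokes Theorem~4 of \cite{CDGM} (Chung--Diaconis--Graham--Mallows, on permanents of complements of direct sums of identity matrices) for the statement that $|\S_{m,n}(c_1,\dots,c_n)|$ is maximized at $c_i\equiv m$; it does not attempt a self-contained proof of the Schur-concavity.

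Your proposal to prove this directly by a transfer/smoothing step is the natural thing to try, and the reduction of the elementary step to the monotonicity inequality $N(M-e_i;\vec c')\ge N(M-e_j;\vec c')$ (equivalently, after the $i\leftrightarrow j$ swap, a rearrangement inequality pairing smaller multiplicity with larger forbidden count) is correctly set up. The gap is that you do not actually close the induction: once a card is removed the two types no longer have equal multiplicity, so the hypothesis you flagged as ``essential'' is not available at the next step, and the vague appeal to ``the reduce-a-card and move-a-forbidden-label operations feed into one another'' is not yet an argument. This is precisely the nontrivial combinatorial content that \cite{CDGM} supplies; unless you produce a genuine induction or an explicit injection between the two families of avoidant arrangements, you should cite that result as the paper does.
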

\begin{proof}
	Let $\c{G}$ be any strategy, and let $g_t$ denote the guess made for $\rand{\pi}_t$ assuming no correct guesses have been made up to this point.  Let $c_i=|\{t:g_t=i\}|$ and let $\S_{m,n}(c_1,\ldots,c_n)$ denote the number of permutations which have no 1 in the first $c_1$ positions, then no 2 in the next $c_2$ positions, and so on.  It is not too difficult to see that $\Pr[X(\c{G})]=|\S_{m,n}(c_1,\ldots,c_n)|/|\S_{m,n}|$.  Intuitively the quantity $|\S_{m,n}(c_1,\ldots,c_n)|$ is maximized when $c_i=m$ for all $i$, and one can formally show this by using Theorem~4 of \cite{CDGM}.  
	
	In total, $\Pr[X(\c{G})]$ is maximized by any strategy which guesses each card type $m$ times whenever it guesses no cards correctly.  The result follows since $\c{A}$ is such a strategy.
\end{proof}

\begin{proof}[Proof of Theorem~\ref{thm:minus}]
	The upper bound follows from Proposition~\ref{prop:A}.  For the lower bound, fix some strategy $\c{G}$.  Observe that $\E[S(\c{G},\rand{\pi})]$ is always at least the probability that some card is guessed correctly using $\c{G}$, so by Lemma~\ref{lem:X} this quantity is at least $1-\Pr[X(\c{A})]$.  In the language of Lemma~\ref{lem:approx}, this is equal to $\Pr[f(\rand{\pi})\le mn]$.  By Lemmas~\ref{lem:approx} and \ref{lem:integral}, this quantity is at least
	\[\int_0^m e^{-x}dx-\ep m-O_{\ep,m}(n^{-1})=1-e^{-m}-\ep m-O_{\ep,m}(n^{-1}).\]
	Taking $\ep$ arbitrarily small gives the desired result.
\end{proof}

\section{Concluding Remarks.}\label{sec:concluding}
There are many questions that remain in this area.  Perhaps the most embarrassing gap in our knowledge is in understanding the score under the optimal mis\`ere strategy with Yes/No feedback.  Theorem~\ref{thm:minus} proves that this quantity is bounded away from 0, but there is still a large gap between this and the upper bound of Theorem~\ref{THM:LOWER}.  We conjecture the following.

\begin{conj}
	If $n$ is sufficiently large in terms of $m$, then under an optimal mis\`ere strategy with Yes/No feedback,
	\[\E[\b{S}_{m,n}]=m-o(m),\]
	where this bound holds uniformly in $n$.
\end{conj}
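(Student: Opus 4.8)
Since the avoiding strategy already supplies the matching upper bound (Proposition~\ref{prop:A} and Theorem~\ref{thm:minus} give that the optimal mis\`ere value is at most $m-1+o(1)\le m$), the entire content of the conjecture is the \emph{lower} bound: every Yes/No strategy $\c{G}$ must satisfy $\E[S(\c{G},\rand{\pi})]\ge m-o(m)$ as $m\to\infty$, uniformly in $n$. Note this is a genuinely information-limited phenomenon and cannot be reduced to Complete feedback: with Complete feedback one can certify exhausted card types for free, and a short computation shows the Complete-feedback mis\`ere value tends to $0$, whereas the trivial strategy of always guessing ``1'' already scores \emph{exactly} $m$ with Yes/No feedback. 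Thus the conjecture asserts that no strategy can shave more than $o(m)$ off this trivial value. The plan is to set this up via posteriors. Fix a (without loss of generality deterministic) strategy, let $\c{H}_{t-1}$ denote the correct/incorrect feedback history before step $t$, and write $p_i^{(t)}=\Pr[\rand{\pi}_t=i\mid \c{H}_{t-1}]$. Since the minimizing player guesses the type of smallest posterior, for \emph{any} strategy $\E[S(\c{G},\rand{\pi})]\ge \E\big[\sum_{t=1}^{mn}\min_i p_i^{(t)}\big]$.

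The key structural input is a conservation identity that holds along the history of \emph{any} strategy: by the tower property and $\Pr[\rand{\pi}_t=i]=1/n$,
\[\E\Big[\sum_{t=1}^{mn} p_i^{(t)}\Big]=\sum_{t=1}^{mn}\Pr[\rand{\pi}_t=i]=m\qquad\text{for each fixed }i,\]
so that $\E\big[\sum_t \tfrac1n\sum_i p_i^{(t)}\big]=\E\big[\sum_t \tfrac1n\big]=m$. Because $\min_i p_i^{(t)}\le \tfrac1n\sum_i p_i^{(t)}=\tfrac1n$ at every step, the problem reduces cleanly to bounding the cumulative ``savings''
\[\E\Big[\sum_{t=1}^{mn}\Big(\rec{n}-\min_i p_i^{(t)}\Big)\Big]=o(m),\]
uniformly over all strategies. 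In words: I must show that along any adaptive trajectory the minimum posterior stays, on average over the $mn$ steps, within a $o(m)$ total of the uniform value $1/n$.

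The mechanism I would exploit to bound these savings is that the only way to force $\min_i p_i^{(t)}=0$ is to \emph{certify} a type as exhausted, which under Yes/No feedback requires guessing that type correctly $m$ times and therefore itself costs $m$ in score. For uncertified types the posterior is strictly positive, and the player can only drive it below $1/n$ using the ``soft'' evidence of repeated incorrect guesses. I would introduce a potential that tracks, for each type, the log-likelihood displacement of its posterior from the prior, and attempt to show the total displacement the player can accumulate over the game is $o(m)$, which would cap the cumulative savings. To control individual posteriors I would lean on the permanent/monotonicity machinery of Chung, Diaconis, Graham, and Mallows~\cite{CDGM} — the same tool that powers Lemma~\ref{lem:X} — but upgraded from the single all-incorrect event (score $0$) to a comparison statement for the whole profile of partial scores, ideally a stochastic-domination result asserting that the uniform guessing profile dominates any adaptive profile in the sense relevant to $\sum_t(\tfrac1n-\min_i p_i^{(t)})$.

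The hard part will be exactly this last step: unlike the score-$0$ event treated by Lemma~\ref{lem:X}, the aggregate of the below-average posterior mass is not governed by a single permanent but by the behaviour of the permanents of $0/1$ matrices all along a trajectory that the player chooses adaptively. The posteriors genuinely deviate from uniform — a player can become quite confident (without certifying) that a heavily-guessed type is gone, making $\min_i p_i^{(t)}\ll 1/n$ at many individual steps — and no current technique controls how much such soft information can be amassed in total, nor rules out that many types are suppressed simultaneously. Establishing that the summed effect is nevertheless $o(m)$, rather than a constant fraction of $m$, is the crux and is precisely why the statement remains a conjecture; I expect that resolving it will require a new quantitative handle on these permanent-valued posteriors along adaptive histories, perhaps via an entropy or martingale bound on the cumulative information extractable from incorrect guesses.
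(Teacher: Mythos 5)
This statement is one of the paper's open conjectures: the authors prove only $1-e^{-m}-o(1)\le \E[\b{S}_{m,n}]\le m-\rec{40}\sqrt{m}$ under optimal mis\`ere play (Theorems~\ref{thm:minus} and \ref{THM:LOWER}) and explicitly leave the present statement unresolved, so there is no paper proof to compare against. Your write-up is likewise not a proof, as you yourself acknowledge; the only question is whether your setup is sound and whether it makes genuine progress. The setup is sound: the content of the conjecture is indeed the lower bound, since the always-guess-``1'' strategy and the avoiding strategy (Proposition~\ref{prop:A}) already give mis\`ere value at most $m$; the conservation identity $\E\big[\sum_{t=1}^{mn}p_i^{(t)}\big]=m$ is a correct application of the tower property together with $\Pr[\rand{\pi}_t=i]=\rec{n}$; and $\E[S(\c{G},\rand{\pi})]\ge\E\big[\sum_{t}\min_i p_i^{(t)}\big]$ holds for every strategy $\c{G}$. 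Hence the conjecture is equivalent to showing that the cumulative savings $\E\big[\sum_{t=1}^{mn}\big(\rec{n}-\min_i p_i^{(t)}\big)\big]$ is $o(m)$ uniformly over strategies.

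The gap is that this equivalence is a reformulation, not progress: the entire difficulty now sits, untouched, in the one step you do not prove. The only concrete tool you invoke---the permanent monotonicity of \cite{CDGM} behind Lemma~\ref{lem:X}---controls a single event (score zero) and yields precisely the constant lower bound $1-e^{-m}-o(1)$ of Theorem~\ref{thm:minus}, i.e., $\Theta(1)$ rather than the required $m-o(m)$; nothing in your argument improves on it. The proposed upgrades (a log-likelihood potential, an entropy or martingale bound on extractable information, stochastic domination along adaptive trajectories) are named but never formulated as statements one could attempt to prove. There is also a concrete error in your mechanism: it is false that the only way to force $\min_i p_i^{(t)}=0$ is to certify a type by guessing it correctly $m$ times. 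Incorrect guesses impose hard constraints too; already for $m=1$, $n=2$, an incorrect guess of ``1'' at the first position certifies $\rand{\pi}_2=1$, so the minimum posterior vanishes at no cost in score, and analogous effects occur near the end of any deck. Quantifying how much an adaptive player can profit from such free negative information is exactly the open problem, so it cannot be dismissed as ``soft'' evidence that the potential argument will handle.
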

Observe that Theorem~\ref{thm:partAsy} shows that the analogous conjecture for the optimal strategy is true.

It would be of significant interest to determine the asymptotic value of $\E[\b{S}_{2,n}]$ under an optimal strategy with Yes/No feedback, but this seems difficult.  We claim that one can adapt the methods of \cite{DGHS} to prove $\E[\b{S}_{m,n}]\le 2.7m$ for any $m$ provided $n$ is sufficiently large in terms of $m$.  In light of Theorem~\ref{thm:partAsy}, we know that the ratio $\E[\b{S}_{m,n}]/m$ tends to 1 as a function of $m$ provided $n$ is sufficiently large in terms of $m$.  Given this and Theorem~\ref{thm:1}, we suspect the following holds.
\begin{conj}
	For all $m$ and $\ep>0$, if $n$ is sufficiently large in terms of $m$, we have under an optimal strategy with Yes/No feedback,
	\[\E[\b{S}_{m,n}]\le (e-1)m+\ep.\]
\end{conj}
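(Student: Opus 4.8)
The plan is to split on the size of $m$. For $m$ larger than some absolute constant $m_0$, the conjecture is already a consequence of Theorem~\ref{thm:partAsy}: that result gives $\E[\b{S}_{m,n}]\le m+Cm^{3/4}\log m$ for an absolute constant $C$ once $n$ is large in terms of $m$, and since $m^{3/4}\log m=o(m)$ the linear term $(e-2)m$ eventually dominates, so $m+Cm^{3/4}\log m\le (e-1)m$ for all $m\ge m_0$ (indeed with room to spare). Thus it suffices to treat each fixed $m\in\{1,\ldots,m_0-1\}$ with $n\to\infty$ separately. The case $m=1$ is exactly Theorem~\ref{thm:1}, which yields $\E[\b{S}_{1,n}]=e-1+O(1/n!)\le (e-1)+\ep$. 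So the whole conjecture reduces to finitely many fixed small values $m\in\{2,\ldots,m_0-1\}$, for each of which we must bound the (unknown) optimal strategy from above by $(e-1)m+\ep$ uniformly for large $n$.

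For a fixed such $m$, fix an arbitrary strategy $\c{G}$ and let $\c{I}_{i-1}$ denote the Yes/No information available before step $i$. Writing $\E[\b{S}_{m,n}]=\sum_{i=1}^{mn}\Pr[\b{X}_i=1]$, the success probability at step $i$ conditioned on $\c{I}_{i-1}$ is at most the largest conditional marginal $M_i(\c{G}):=\max_j \Pr[\rand{\pi}_i=j\mid \c{I}_{i-1}]$, so $\E[\b{S}_{m,n}]\le \sum_i \E[M_i(\c{G})]$, and it suffices to bound $\sup_{\c{G}}\sum_i \E[M_i(\c{G})]$. Each marginal $\Pr[\rand{\pi}_i=j\mid\c{I}_{i-1}]$ admits an explicit expression through the permanent formulas of Chung, Diaconis, Graham, and Mallows~\cite{CDGM}; the advantage of passing to $M_i$ is that it no longer references which card the player guessed, only the confirmations and exclusions recorded so far.

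The heart of the argument is a uniform bound on $\sup_{\c{G}}\sum_i\E[M_i(\c{G})]$. I would organise the timeline into the blocks delimited by correct guesses and argue that inside a block $M_i$ is governed by the ratio of the dominant remaining multiplicity to the number of remaining cards, which is $O(1/n)$ until a correct guess resets the bookkeeping. The key technical claim is a monotonicity/anti-concentration statement: the exclusion information generated by incorrect guesses (``type $j$ is not at this position'') can inflate the largest marginal by only a controlled factor, in the same spirit as the permanent monotonicity of \cite[Theorem~4]{CDGM} already invoked in Lemma~\ref{lem:X}. Granting such control, the baseline contribution sums to $m$ and the surplus is dominated by a rapidly converging series; the improvement from the crude constant (one can extract roughly $e\,m$, or $2.7m$ as remarked above, from the methods of \cite{DGHS}) down to $(e-1)m$ should come from replacing a union-type tail estimate on the number of correct guesses by the sharp $\sum_{k\ge 1}1/k!$ accounting that produces the $e-1$ in the $m=1$ case of Theorem~\ref{thm:1}.

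The main obstacle is precisely this last uniform estimate over \emph{adaptive} strategies. For $m=1$ the optimal strategy is explicit and the analysis collapses to the derangement-type computation behind Theorem~\ref{thm:1}, but for $m\ge 2$ a strategy can try to manufacture a concentrated conditional distribution by spending incorrect guesses to carve out exclusions, and ruling out that this ever accumulates enough to push $\sum_i\E[M_i(\c{G})]$ past $(e-1)m$ appears to require a genuinely new anti-concentration bound for the relevant permanents. This is the step I do not know how to complete, which is why the statement remains a conjecture; the reduction to finitely many small $m$ above is, however, unconditional, and already isolates the difficulty to the regime of bounded $m$ and large $n$.
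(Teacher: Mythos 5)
The statement you are attempting is one of the paper's open conjectures: the paper offers no proof of it, and the surrounding discussion explains why it is out of reach --- the best upper bound the authors can extract for fixed small $m$ is roughly $2.7m$ (by adapting the methods of \cite{DGHS}), which is far above $(e-1)m\approx 1.72m$. Your proposal does not close this gap, and to your credit you say so explicitly at the end. What you do establish is correct but is only a reduction: Theorem~\ref{thm:partAsy} handles all $m\ge m_0$ for some absolute constant $m_0$ (since $Cm^{3/4}\log m\le (e-2)m$ eventually), and Theorem~\ref{thm:1} handles $m=1$, leaving the conjecture open for the finitely many values $m\in\{2,\ldots,m_0-1\}$ with $n\to\infty$. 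This isolates exactly the regime the authors themselves identify as the real problem: they note after Theorem~\ref{thm:partAsy} that $\E[\b{S}_{m,n}]/m\to 1$ as $m$ grows, so the content of the conjecture was always the bounded-$m$ case.

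The missing step is your ``key technical claim,'' and it is not a technicality --- it is the whole conjecture. The inequality $\E[\b{S}_{m,n}]\le\sum_i\E[M_i(\c{G})]$ is sound, but controlling $\sum_i\E[M_i(\c{G})]$ over \emph{adaptive} strategies is precisely where every known approach stops. Your appeal to \cite[Theorem 4]{CDGM} ``in the spirit of Lemma~\ref{lem:X}'' does not transfer: Lemma~\ref{lem:X} invokes that permanent inequality for a single global event (zero correct guesses, under a history of guesses that is effectively nonadaptive because no feedback has changed anything), whereas you need pointwise control of the largest conditional marginal after an arbitrary adaptive history of confirmations and exclusions --- a stronger statement with no proven analogue in \cite{CDGM} or \cite{DGHS}. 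Likewise, the ``$\sum_{k\ge 1}1/k!$ accounting'' that produces $e-1$ in Theorem~\ref{thm:1} works only because for $m=1$ the optimal strategy is explicitly known, a point the paper stresses immediately after that theorem; for $m\ge 2$ the optimal strategy is unknown, and nothing in your outline rules out that adaptive exclusion-carving pushes the sum past $(e-1)m$. So what you have is an honest unconditional reduction plus a research program, not a proof; the statement remains a conjecture after your argument, exactly as it does in the paper.
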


Another problem of interest would be to close the gap in the error terms of Theorems~\ref{THM:LOWER} and \ref{thm:partAsy} for the optimal strategy.  In particular, we conjecture the following.
\begin{conj}
	For all $\ep>0$, if $n$ is sufficiently large in terms of $m$, then under an optimal strategy with Yes/No feedback,
	\[\E[\b{S}_{m,n}]=m+O(m^{1/2+\ep}),\]
	where this bound holds uniformly in $n$.
\end{conj}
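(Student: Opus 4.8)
The lower bound is already supplied by Theorem~\ref{THM:LOWER}, which gives $\E[\b{S}_{m,n}]\ge m+\rec{40}\sqrt m$ for $n\ge 8m$. Hence the entire content of the conjecture is the matching \emph{upper} bound, i.e.\ sharpening the estimate $m+O(m^{3/4}\log m)$ of Theorem~\ref{thm:partAsy} to $m+O(m^{1/2+\ep})$. The plan is to bound, uniformly in $n$, the surplus $\E[\b{S}_{m,n}]-m$ achieved by an arbitrary strategy, in particular the optimal one.

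First I would pass to a per-step estimate. Index the guesses by $i=1,\ldots,mn$, let $\mathcal{I}_i$ denote the Yes/No feedback received before the $i$th guess, and let $\mu_{i,j}=\Pr[\text{the $i$th card has type }j\mid \mathcal{I}_i]$ be the posterior that the next card is type $j$. Since whatever the player guesses succeeds with conditional probability at most $\max_j\mu_{i,j}$, we obtain, for \emph{every} strategy, $\E[\b{S}_{m,n}]\le\sum_i\E[\max_j\mu_{i,j}]$. Let $L_i$ be the number of \emph{live} types (those with positive posterior); because $\sum_j\mu_{i,j}=1$ and dead types contribute $0$, the live types carry average posterior $L_i^{-1}$, so writing $D_i=\max_j\mu_{i,j}-L_i^{-1}\ge0$ gives the decomposition
\[\E[\b{S}_{m,n}]\le\sum_{i=1}^{mn}\E[L_i^{-1}]+\sum_{i=1}^{mn}\E[D_i].\]
The first sum is the baseline, and here lies the structural reason the answer is bounded in $n$: under Yes/No feedback a type leaves the live set only after the player has personally confirmed all $m$ of its copies, so the number of dead types is at most $\b{S}_{m,n}/m$, which is $O(1)$ by Theorem~\ref{thm:partAsy}. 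Thus $L_i\ge n-O(1)$ throughout and $\sum_i\E[L_i^{-1}]=m+O(m/n)=m+o(1)$. (Under \emph{complete} feedback, by contrast, $L_i$ collapses to $O(1)$ over the last $\Theta(\sqrt{mn})$ cards and the same sum grows like $\log n$; this is precisely why complete feedback is stronger and why the Yes/No surplus can be $n$-independent.) Everything therefore reduces to the single estimate $\sum_i\E[D_i]=O(m^{1/2+\ep})$.

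To control $\sum_i\E[D_i]$ I would analyze the Yes/No posterior on the vector of remaining type-counts, which is governed by permanents of $0/1$ matrices as in Chung, Diaconis, Graham, and Mallows~\cite{CDGM}. The goal is a concentration statement: conditioned on any Yes/No history, each remaining count has conditional variance $O(m)$ with only weakly correlated fluctuations, so that the posterior mode $\max_j\mu_{i,j}$ typically exceeds the mean $L_i^{-1}$ by $O(\sqrt m/R_i)$ rather than by $\Theta(1)$, where $R_i=mn-i+1$. Feeding such pointwise bounds into the sum, together with a Freedman-type martingale bound on the accumulated surplus $\sum_{i'\le i}(\b{X}_{i'}-\mu_{i',g_{i'}})$ (where $g_{i'}$ is the type guessed at step $i'$), should produce a total of $O(\sqrt m\cdot\mathrm{polylog})$, with the $\ep$ absorbing the logarithmic factors. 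Concretely, I expect to replace whatever union bound or crude second-moment estimate yields the exponent $3/4$ in Theorem~\ref{thm:partAsy} by the sharp second-moment/martingale estimate that yields $1/2$.

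The main obstacle is exactly this posterior-skew bound, and in particular making it uniform in $n$. A complete-feedback player detects the instantaneously most over-represented type with deviation of order $\sqrt m/R_i$, and summing over the whole game makes the surplus grow with $n$; it is precisely this behavior that one must rule out for Yes/No feedback. The crux is therefore to quantify how little the one-bit-per-round feedback reveals about \emph{which} type is currently skewed---equivalently, to show via the permanent representation of~\cite{CDGM} that the Yes/No posterior on deck compositions stays far flatter than the complete-feedback posterior, so that the total exploitable skew $\sum_i\E[D_i]$ is $O(m^{1/2+\ep})$ with no dependence on $n$. This is the step that the methods of~\cite{DGHS,CDGM} address only partially, and it is where I expect the genuine difficulty to lie.
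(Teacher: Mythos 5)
This statement is one of the paper's \emph{conjectures}: it appears in the concluding remarks as an open problem, and the paper contains no proof of it (the paper's best upper bound is Theorem~\ref{thm:partAsy}, $m+O(m^{3/4}\log m)$). Your proposal does not close that gap. The easy parts of your argument are fine: the lower-bound direction of the conjecture is trivial (always guessing ``1'' scores exactly $m$, so the optimum is at least $m$; Theorem~\ref{THM:LOWER} gives more for $n\ge 8m$), and the per-step bound $\E[\b{S}_{m,n}]\le\sum_i\E[\max_j\mu_{i,j}]$ is a valid reduction. But after decomposing the surplus as $\sum_i\E[D_i]$, the claim that this sum is $O(m^{1/2+\ep})$ uniformly in $n$ is exactly the content of the conjecture, and at that point your text switches from proof to intention (``the goal is a concentration statement,'' ``should produce,'' ``I expect''). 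No argument is given for the posterior-flatness estimate, and you say yourself that this is where the genuine difficulty lies. So what you have is a reasonable reduction plus a research plan, not a proof; the crux that distinguishes the conjectured exponent $1/2$ from the known exponent $3/4$ remains untouched.

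Two secondary points in the reduction also need repair. First, the claim that under Yes/No feedback a type leaves the live set only after the player has confirmed all $m$ of its copies is false in general: with $m=1$, $n=2$, guessing ``1'' and hearing ``No'' immediately kills type ``2'' at the final position, and similar end-of-deck counting inferences can kill types without any confirmation. Second, $L_i\ge n-\b{S}_{m,n}/m$ is a bound by a random variable, so concluding $\sum_i\E[L_i^{-1}]=m+o(1)$ requires a tail bound on the score (or a deterministic cap on how many types can die), not just the bound on $\E[\b{S}_{m,n}]$ from Theorem~\ref{thm:partAsy}. Both issues look fixable, but fixing them would still leave the main estimate --- and hence the conjecture --- open.
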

That is, we suspect the lower bound of Theorem~\ref{THM:LOWER} to be essentially best possible.  


\begin{acknowledgment}{Acknowledgments}
The authors would like to thank Steve Butler, Maryanne Gatto, Xiaoyu He, and Matthew Kwan for fruitful discussions.  The authors would also like to thank the referees and editors for their helpful comments, and in particular for improving the statement and proof of Theorem 1.  The first author was partially supported by National Science Grant  DMS 195404.  The last author was partially supported by  the National Science Foundation Graduate Research Fellowship under Grant No. DGE-1650112.
\end{acknowledgment}

\begin{biog}
		\item[Persi Diaconis] has been shuffling cards since he was five years old. He learned to perfectly shuffle at age 13 and started proving theorems about shuffling at age 20 (he is still at it). He is professor of mathematics and statistics at Stanford University.
		
		\begin{affil}
			Department of Mathematics and Statistics, Stanford, Stanford CA 94305\\ 
				diaconis@math.stanford.edu
		\end{affil}
		
		\item[Ron Graham] was the Irwin and Joan Jacobs Professor in the Department of Computer Science and Engineering at the University of California, San Diego, and the Chief Scientist of the California Institute for Telecommunications and Information Technology. He was a former president of the Mathematical Association of America and the American Mathematical Society and had been awarded numerous honors for his extensive groundbreaking work in discrete mathematics. He co-wrote Magical Mathematics with Persi Diaconis, which was awarded the 2013 Euler Book Prize.
		
		\begin{affil}
			Department of Mathematics, UC San Diego, La Jolla CA 92093
		\end{affil}
		
		\item[Sam Spiro] is a PhD student at UC San Diego studying combinatorics with Jacques Verstra\"ete.  In addition to playing card games, he also enjoys reading books and making math puns.
		\begin{affil}
			Department of Mathematics, UC San Diego, La Jolla CA 92093\\
			sspiro@ucsd.edu
		\end{affil}
	\end{biog}
\end{document}